\newcommand{\io}{\int_\Omega}
\newcommand{\duav}[1]{\langle{#1}\rangle}
\newcommand{\R}{\mathbb{R}}
\newcommand{\RR}{\mathbb{R}}
\newcommand{\HI}{\mathcal{H}}
\newcommand{\Om}{\Omega}
\def\supp{\,{\rm supp \ }}
\def\argmin{\,{\rm argmin \ }}
\def\H{\mathcal H}
\def\eps{\varepsilon}
\numberwithin{equation}{section}
\mathchardef\emptyset="001F
\newtheorem{theorem}{Theorem}[section]
\newtheorem{prop}[theorem]{Proposition}
\newtheorem{lemma}[theorem]{Lemma}
\theoremstyle{definition}
\newtheorem{remark}[theorem]{Remark}
\newcommand{\betti}{\color{black}}  
\newcommand{\bl}{\color{black}}
\numberwithin{equation}{section}
\begin{document}

\title{ A rigorous sharp interface limit \\ of a diffuse interface model related to tumor growth}

\author{
Elisabetta Rocca\\
{\sl Dipartimento di Matematica, Universit\`a degli Studi di Pavia}\\ 
{Via Ferrata 5,  I-27100 Pavia, Italy}\\
 {\rm E-mail:~~\tt  elisabetta.rocca@unipv.it}
\and 
Riccardo Scala\\
{\sl Dipartimento di Matematica, Universit\`a degli Studi di Pavia}\\ 
{Via Ferrata 5,  I-27100 Pavia, Italy}\\
 {\rm E-mail:~~\tt  riccardo.scala@unipv.it}
}


\maketitle
\begin{abstract}
 In this paper we study the  rigorous \bl sharp interface limit of a diffuse interface model  related \bl to the dynamics of 
 tumor growth, when a parameter $\eps$,  representing  \bl the interface thickness  between the tumorous and non tumorous cells\bl, 
 tends to zero. More in particular, we analyze  here a
 gradient-flow type model arising from a modification of the recently introduced model for tumor growth dynamics in \cite{HZO} (cf. also \cite{Hil}). 
 Exploiting the techniques related to both gradient-flows and gamma convergence, we recover a condition on the interface $\Gamma$ 
 relating the chemical and double-well potentials, the mean curvature, and the normal velocity. 
\end{abstract}

\noindent {\bf Key words:}~~sharp interface limit, gamma convergence, gradient-flow, diffuse interface models, Cahn-Hilliard equation, reaction-diffusion equation, nonlocal operators,  \bl tumor growth.

\vspace{2mm}

\noindent {\bf AMS (MOS) subject clas\-si\-fi\-ca\-tion:}  49J40, 82B24, 35K46, 35K57, 35R11, 92B05. \bl


\section{Introduction}
\label{sec:intro}
The morphological evolution of a growing solid tumor is the result of the dynamics of a complex system that includes many nonlinearly interacting factors, including cell-cell and cell-matrix adhesion, mechanical stress, cell motility and angiogenesis, just to name a few. Numerous mathematical models have been developed to study various aspects of tumor progression and this has been an area of intense research interest (see the recent reviews by Fasano et al. \cite{FBG}, Graziano and Preziosi \cite{GP07}, Friedman et al. \cite{FBM07}, Bellomo et al. \cite{BLM08}, Cristini et al. \cite{CEtAl08}, and Lowengrub et al. \cite{LEtAl10}). The existing models can be divided into two main categories: continuum models and discrete models. We concentrate on the former ones. There the necessity of dealing with multiple interacting constituents has led to the consideration of diffuse-interface models based on continuum mixture theory (see, for instance, \cite{CLW09} and references therein). 
In the diffuse approach, sharp interfaces are replaced by narrow transition layers that arise due to differential adhesive
forces among the cell-species. The main advantages of the diffuse interface formulation are: 
\begin{itemize}
\item[-] it eliminates the need to enforce complicated boundary conditions across the tumor/host tissue and other species/species interfaces that would have to be satisfied if the interfaces were assumed sharp, and
\item[-] it eliminates the need to explicitly track the position of interfaces, as is required in the sharp interface framework.
\end{itemize}

Such models generally consist of Cahn-Hilliard equations with transport and reaction terms which govern various types of cell concentrations. Here we consider only one tumor specie and we denote the tumorous phase  by \bl $u$. The reaction terms depend on the nutrient concentration (e.g., oxygen), denoted here by $\sigma$, which obeys an advection-reaction-diffusion equation coupled with the Cahn-Hilliard equations. The cell velocities satisfy a generalized Darcy's (or Brinkman's) law where, besides the pressure gradient, also appears the so-called Korteweg force due to cell concentration. 

While there exist quite a number of numerical simulations of diffuse-interface models of tumor growth (cf., e. g., \cite[Chap. 8]{CL10}, \cite{CLW09, HZO, WLFC08}), there are still only a few contributions to the mathematical analysis of the models. The first contributions in this direction dealt with the case where the nutrient is neglected, which then leads to the so-called Cahn-Hilliard-Hele-Shaw system (see, e.g., \cite{LTZ13, JWZ15}).  Moreover, we refer to the paper \cite{GetAl15} where a new model for tumor growth including different densities is introduced and a formal sharp interface limit is performed. Finally, in the recent contribution \cite{FGR} the model introduced in \cite{HZO} (where the velocity is neglected) was rigorously analyzed concerning well-posedness, regularity, and asymptotic behavior. We also refer to the recent papers \cite{CGRS1, CGRS2}, in which various viscous approximations of the state system have been studied analytically,  and \cite{CGRS3} where a first optimal control problem for tumor growth models has been investigated. \bl  Hence, the existing literature is just at a preliminary step towards the theoretical analysis of more refined models. 

 Regarding the transition from diffuse to sharp interfaces, \bl several results already regard some formal passages to the sharp interface limit (cf., e.g., \cite{GetAl15, Hil}), but, up to our knowledge, no rigorous theorems are proved for such coupled systems. Only very recently in \cite{DFRSS} we investigated the existence of weak solutions and some rigorous sharp interface limit (in a simplified case) for a model introduced in \cite{CWSL14} where both velocities (satisfying a Darcy law with Korteweg term) and multispecies tumor fractions as well as the nutrient evolutions are taken into account. However these are very partial results, because only the coupling between the \bl Cahn-Hilliard equation and Darcy  law for the velocities  is considered \bl and, for example, the physically meaningful case of a double-well potential in the Cahn-Hilliard equation (cf.~\eqref{def:v}) \bl cannot be accounted for in \cite{DFRSS}. 

Hence, the main goal of  this paper is to \bl  perform a rigorous sharp interface limit as the thickness of the interface goes to zero, in the spirit of what is already known for the  standard \bl Cahn-Hilliard equation (cf., e.g., \cite{Le}  and references therein). The model under consideration is a variant of the diffuse interface tumor growth model introduced in \cite{HZO} where we first write down the system as a {\em gradient-flow system} and then use refined results of gamma convergence already exploited  in \cite{SS}, and applied to the Cahn-Hilliard equation in \cite{Le}.
 Another possibility would be the one of considering the known results for Cahn-Hilliard equations by \cite{C96}, trying to extend them to the coupled Cahn-Hilliard-Darcy system (first neglecting the nutrient) in the spirit of \cite{AL14}, and then trying to get possibly weaker results for the complete system. This is a work in progress. 
 
Here, more \bl in particular, we  aim to let \bl $\eps$ tend to zero in the following PDE system in $\Omega\times (0,T)$, where $\Omega\subset \RR^3$ denotes a {\em regular} domain,
\begin{align}\label{PDE}
 \begin{cases}
  &\dot u+A^{ s\bl }v=R(u,v,\sigma)\\
  &\dot \sigma+A^s(\sigma)=-R(u,v,\sigma),
 \end{cases}
\end{align}
coupled with suitable initial conditions, where we choose $R(u,v,\sigma)= 2\sigma +u-v$ and 
\begin{equation}
\label{def:v}
v=-\eps\Delta u+\frac{W'(u)}{\eps}, \quad W(u)=(u^2-1)^2\,. 
\end{equation}
\bl 
Here $A$ denotes the Laplace operator with Neumann homogeneous boundary conditions and $A^s$ stands for \bl its power $s$ with  $s\geq 1$\bl. 
Then, introducing the auxiliary variable $\varphi:=u+\sigma$, we rewrite  \eqref{PDE} as the 
following gradient-flow system:
\begin{align}
 (\dot\varphi,\dot\sigma)=-\nabla_{X^\eps\times Y^\eps}E^\eps(\varphi,\sigma),
\end{align}
where $X^\eps\times Y^\eps=H^{-s}_n(\Om)\times L^2(\Om)$,  being $H^{-s}_n(\Om)$ the dual space of $H^s_n(\Omega)=D(A^{s/2})$, \bl and where the energy functional $E^\eps$ is defined as 
\begin{align*}
 E^\eps(\varphi, \sigma):=\begin{cases}
                           M^\eps(\varphi-\sigma)+F(\varphi, \sigma)&\text{if }(\varphi,\sigma)\in L^2(\Om)\times H_n^s(\Om)\betti\text{ and }\varphi-\sigma\in H^1(\Om)\\
                           +\infty&\text{otherwise.}
                          \end{cases}
\end{align*}
Here $M^\eps$ is the  functional defined on $H^1(\Om)$ by
\begin{align}
 M^\eps(u):=\int_\Om\left(\frac{W(u)}{\eps}+\frac{\eps}{2}|\nabla u|^2\right)\, dx,
\end{align}
and $F$ is the function on $L^2(\Om)\times H_n^s(\Om)$ given by
\begin{align}
 F(\varphi, \sigma):=\int_\Om\left(\sigma\varphi+\frac{|\sigma|^2}{2}\right)dx+\frac{a_s(\sigma,\sigma)}{2}
\end{align}
\bl 
being $a_s$ the bilinear form associated to $A^s$ defined as
\begin{align}
 a_s(u,v):=\int_{\Om}A^{s/2}u\;A^{s/2}v dx.
\end{align}

Let us notice that the operator $A^s$ can be interpreted as a {\em nonlocal} contribution to our energy functional modeling nonlocal interactions between cells (cf. \cite{WLFC08, GL1, GL2} for a physical interpretation). 

The presence of such regularization,  entailing that  $v^\eps\in L^2(0,T;H^s_n(\Omega))$ for every $\eps>0$ in \eqref{def:v}, enables us to avoid, in case $s>3/2$, to make further assumptions on the convergence of the approximated chemical potential $v^\eps$ related to the so-called {\sl equipartition of energy} (cf.~Hyp.~(HP1bis) in Section~\ref{hypotheses}), which are instead needed in case $s\in [1,3/2]$. \bl 
Therefore the result in \cite[Theorem 3.2]{RoTon}, in case  $s>3/2$, or the Hyp.~(HP1bis) related to the {\sl equipartition of energy} in case $s\in [1,3/2]$, \bl ensures that the limiting function $v$ satisfies
$$v=-c_Wk\;\;\;\;\;\;\text{ on }\Gamma,$$
where $c_W=\int_{-1}^1 W(s)\, ds$ and $k$ is the mean curvature of the limiting interface $\Gamma$ between the two open sets $\Omega^+$ and $\Omega^-$ where $u$ takes values $u\equiv 1$ and $u\equiv -1$ (the pure phases), respectively.\bl
We also emphasize that in order to get such result, we need to assume some regularity of the limit interface. In particular, the limit interface must be at least of class $C^3$ in the time-space in order that the derivation of its motion law can be obtained.

Let us notice that the choice we make of the coupling function $R$ is almost obliged from the fact that we aim to write down the system as a gradient-flow. Possibly, more general functions  should  be taken into account in order to accomplish with the tumor growth model introduced in \cite{HZO} (cf.~also \cite{FGR, CGRS1, CGRS2}), but we cannot treat these cases with our techniques here. \bl 

In order to obtain our results we proceed as follows:
\begin{itemize}
\item[Step 1] We prove the well-posedness of the system \eqref{PDE} for $\eps>0$ by means of a passage to the limit in a suitable time-discrete approximation scheme.
\item[Step 2] We consider the functional 
\begin{align*}
 E^0(\varphi,\sigma):=\begin{cases}
                    M^0(\varphi-\sigma)+F(\varphi,\sigma)&\text{if }(\varphi-\sigma)\in BV(\Om,\{-1,1\}), \, \sigma\in H_n^s(\Om)\\
                       +\infty&\text{otherwise}\\
                      \end{cases}
\end{align*}
where $M^0$ is defined as
\begin{align}
 M^0(u):=\begin{cases}
                       c_W \H^{n-1}(\Gamma)&\text{if }u\in BV(\Om,\{-1,1\})\\
                       +\infty&\text{otherwise,}\\
         \end{cases}
\end{align}
 and we demonstrate that the functionals $E^\eps$  gamma converge to $E^0$ with respect to the $L^1(\Omega)$-topology when $\eps$ tends to $0$.
 \item[Step 3] We state the regularity assumptions we need (in particular on the interface $\Gamma$ between the two phases $u\equiv -1$ and $u\equiv 1$) in order to prove our main result mainly stating that the limit functions (in proper functional spaces) $\varphi$, $\sigma$ and $v$ of $\varphi^\eps$, $\sigma^\eps$ and $v^\eps$  satisfying \eqref{PDE} are solutions of the following system on some time interval $[0,T^*]$:
   \begin{align}\nonumber
  &2\dot\Gamma(t)=-A^s v(t)+\varphi(t)+\sigma(t)-v(t)\hbox{ on }\Omega,\\
  \nonumber
  &\dot\sigma(t)=-A^s\sigma(t)+v(t)-\varphi(t)-\sigma(t)\hbox{ on }\Omega,\\
  \nonumber
&v(t)=-c_Wk(t) \text{ on }\Gamma,\quad A^s v(t)=\varphi(t)+\sigma(t)-v(t)\hbox{ on $\Om^+\cup\Om^-$},
\end{align}
where $\dot\Gamma$ denotes here the normal velocity of the interface $\Gamma$.
\item[Step 4] In the special cases when $s=1$ ($A^s=-\Delta$) and $s=2$ ($A^s=\Delta^2$), then, we can also deduce that 
$$\left[\frac{\partial v}{\partial n}\right](t)=-2\dot\Gamma(t)\;\;\;\;\mathcal H^2-\text{a.e. on }\Gamma \hbox{ and for a.e. $t\in[0,T^*]$}$$
and 
$$\left[\frac{\partial \Delta v}{\partial n}\right](t)=-2\dot\Gamma(t)\;\;\;\;\mathcal H^2-\text{a.e. on }\Gamma \hbox{ and  for a.e. $t\in[0,T^*],$}$$
respectively, where $[\cdot]$ denotes the jump of the functions across $\Gamma$. 
\end{itemize}
\betti 
 Let us emphasized that the techniques of proof are quite elementary and strongly based on previous results on the $\Gamma$ convergence of the Modica-Mortola functional \cite{MoMo} and on the convergence of the solutions to the Cahn-Hilliard equation to the Mullins-Sekerka flow \cite{Le}.
  \bl
 
Finally, let us conclude by mentioning that, although molecular mechanisms and cell-scale migration dynamics are well described, the variable empirical and qualitative observations of tumor invasion and response to therapy illustrate the critical need for biologically realistic and predictive multiscale mathematical models that integrate tumor proliferation and invasion with microvascular effects and microenvironmental substrate gradients. Such complex systems, dominated by large numbers of processes and highly nonlinear dynamics, are difficult to approach by experimental methods alone and can typically be better understood with appropriate mathematical models and sophisticated computer simulations, in addition and complementary to experimental investigations.
By focusing on these common elements, mathematical modeling aims to contribute to the prevention, diagnosis and treatment of this complex disease. The ultimate goal is for modeling and simulation to aid in the development of individualized therapy protocols to minimize patient suffering while maximizing treatment effectiveness. 

More in particular, in larger scale systems, diffuse interface continuum methods provide a good modeling approach and then it is clear that the study of the corresponding sharp interface limits would be an important validation of the models.  In this direction the present contribution is a first step toward the validation of previous works where only formal asymptotic limits were performed (cf., e.g., \cite{GetAl15} and \cite{Hil}). Moreover, we believe that the same techniques could be applied in the future to different type of complex system dynamics like Liquid Crystals' evolution for example. \bl 

\noindent
{\bf Plan of the paper.} 
In Section \ref{setting} we fix some notation and preliminaries. In Section \ref{energies} we introduce our energies functionals and prove the preliminary results about  gamma convergence. Then we prove the well-posedness of our diffuse interface model \eqref{PDE}  for a fixed $\eps>0$ \bl in Section \ref{appr_gradient}. The last part, Section~\ref{sharp}, is devoted to study the limit  of equations \eqref{PDE} as $\eps$ vanishes. Such section is divided in two parts, in the first one we fix some conventions and hypotheses on our setting, in the second one we prove our main result.

\section{Space setting and notation}\label{setting}
Let $\Om$ be a smooth and bounded open subset of $\R^3$.  If $X$ stands for a Hilbert space, we denote by $(\cdot, \cdot)_X$ the scalar product in $X$, while we denote by $\duav{\cdot,\cdot}$ the duality pairing between every two dual spaces. \bl 

 \paragraph{Powers of positive operators.} We denote by $V$ the Hilbert space $H^1(\Om)$ and by $H=L^2(\Om)$, the latter endowed with scalar product $(\cdot,\cdot)$ and norm $\|\cdot\|$. 
 Then, for any $\zeta\in V'$, set 
\begin{align}\label{defiwo}
  &\zeta_\Omega:=\frac{1}{|\Omega|}\duav{\zeta,1},\\
 \label{defiV}
  &{\cal V}':=\{\zeta\in V': \zeta_\Omega=0\},
   \qquad {\cal V}:=V\cap {\cal V}'.
\end{align}
The above notation $\cal{V}'$ is just suggested for the
sake of convenience; indeed, we mainly see ${\cal V}$, ${\cal V}'$ as 
(closed) subspaces of $V$, $V'$, inheriting their norms, 
rather than as a couple of spaces in duality. We introduce the realization of the Laplace operator
with homo\-gene\-ous Neumann boundary con\-diti\-ons as 
\begin{equation}\label{defiB2}
  A:V\to V', \qquad
   \duav {A u,v}:=\io \nabla u\cdot\nabla v\,dx
   \hbox{ for~~$u,v\in V$.}
\end{equation}
Clearly, $A$ maps $V$ onto ${\cal V}'$ 
and its restriction to ${\cal V}$ is an isomorphism of ${\cal V}$ 
onto ${\cal V}'$. Let us denote by ${\cal N}:{\cal V}'\to {\cal V}$ 
the inverse of $A$, so that, for any $u\in V$ and
$\zeta\in {\cal V}'$, there holds
\begin{equation}\label{relaN}
  \duav{A u,{\cal N}\zeta}
  =\duav{A{\cal N}\zeta,u}
    =\duav{\zeta,u}.
\end{equation}
By using the Poincar\'e-Wirtinger inequality, we can easily
show that the norm
\begin{equation}\label{coercNM}
  \Big(\int_{\Omega}\vert\nabla\big({\cal N} \zeta\big)\vert^2\Big)^{1/2}
   =\duav{\zeta,{\cal N} \zeta}^{1/2}\hbox{ for $\zeta\in {\cal V}'$}
\end{equation}
is equivalent to the norm $\Vert \zeta\Vert_{V'}$ and we
will use this norm, when it is convenient.

Define $H^2_n(\Om):=\{w\in H^2(\Omega):\quad\partial_n w=0\quad\hbox{on }\partial\Omega\}$, where $\partial_n$ is the derivative with respect to~the outward normal to $\partial\Omega$, and introduce also the following spaces ${\cal W}:=H_n^2\cap {\cal V}'$ and ${\cal H}:=H\cap {\cal V}'$. Now, it is also possible to restrict the operator ${\cal N}$ to a new isomorphic operator (always called ${\cal N}$) from ${\cal H}$ to ${\cal W}$: it maps $v\in {\cal H}$ into the unique function ${\cal N} v\in \cal W$ such that
$$-\Delta({\cal N } v )\,=\,v \quad\hbox{a.e. in }\Omega,\quad\hbox{and}\quad \partial_n({\cal N} v)\,=\,0 \quad\hbox {a.e. on }\partial\Omega,\,\quad\io{\cal N} v\,=\,0.$$
Note that any solution $\Phi$ to
\begin{equation}
\label{nlaplace}
-\Delta\Phi=v\quad\hbox{a.e. in }\,\Omega\quad\hbox{and }\,\,\partial_n\Phi=0\quad\hbox{a.e. on }\,\partial\Omega,
\end{equation}
corresponding to a $v\in{\cal H}$, can be written as $\Phi={\cal N}v+m$, where $m$ is the mean-value of $\Phi$.

We consider then every positive power $A^s$ of the positive operator $A$, with $s>0$ that \bl can be also defined as follows: if $e_i\in L^2(\Om)$ is a basis of eigenfunctions with eigenvalues $\lambda_i$, $i\in \mathbb N$, then it holds, for all $u\in D(A^s)$,
\begin{equation}\label{s_laplace}
u=\sum_ic_ie_i\;\;\;\Rightarrow \;\;\;A^su=\sum_i\lambda_i^sc_ie_i. 
\end{equation}
%
%

Moreover, for $1<s<2$ we denote by 
\begin{equation}\label{Hs}
H^s_n(\Om):=D(A^{s/2}) 
\end{equation}
and by ${\cal H}_n^s(\Omega):=\{u\in H^s_n(\Omega)\,:\, \int_\Om u=0\}$. We consider on ${\cal H}_n^s(\Omega)$ the scalar product 
\begin{align}\label{as}
 a_s(u,v):=( A^{s/2}u,A^{s/2}v)\quad \forall u,\,v\in \mathcal H^s_n(\Om)\,.
\end{align}
In the space $H^s_n(\Om)$ we can also consider the equivalent norm
\begin{align}
 \label{as+l2}
 a_s(u,u)^{1/2}+\|u\|\quad \forall u\in H^s_n(\Om)\,.
\end{align}

We define 
\begin{align*}
 H^{-s}_n(\Om)=\{f\in (\mathcal H^s_n(\Om))':\exists \,g\in {\cal H}^s_n(\Om)\text{ such that }\langle f,\varphi\rangle=a_s( g,\varphi)\;\;\;\forall \varphi\in {\cal H}^s_n(\Om)\}.
\end{align*}
It can be observed that for all $f\in H^{-s}_n(\Om)$, it holds $g=A^{-s}f$.
We endow the space $H_n^{-s}(\Om)$ with the scalar product
$$( f_1,f_2)_{H_n^{-s}}:=( A^{-s/2}f_1,A^{-s/2}f_2).$$ 
With such product $H_n^{-s}(\Om)$ is a Hilbert space with norm denoted by $\|\cdot\|_{H_n^{-s}}$. 
\bl 

Let $U\subset\subset \Om$ and $K=\bar U$ be compact. Denote by $H_K^s(\Om)$ the space of functions $u\in H^s(\Om)$ such that $\supp u\subset K$.
The interpolation space of order $s\in(0,1)$ between $H$ and $W:=H^2_n(\Om)$ is $[H,W]_s$. From the inclusions $H^2_0(U)\subset H^2_K(\Om)\subset W\subset H^2(\Om)$ and thanks to the fact that $H^s_n(\Om)=D(A^{s/2})=[H,W]_s$ it is possible to prove that
\begin{align}
 H_K^s(\Om)\subset H_n^s(\Om)\subset H^s(\Om).
\end{align}
This follows from the facts that $[H,H^2_0(U)]_s=H_K^s(\Om)$, $s\in(1/2,1)$ (see \cite[Section 4.5]{Tay}). In particular it is seen that smooth functions with compact support in $\Om$ belong to  $H_n^s(\Om)$ for all $s\in(0,1)$. 

\bl
\paragraph{Properties of operators defined on $\Gamma$.} \bl Let $\Gamma$ be a smooth interface between the two open sets $\Om^+$ and $\Om^-$. 
Let us consider the map $T:H^{1/2}(\Gamma)\rightarrow H^1(\Om)$ such that $T(f)=\tilde f$, where, for $f\in H^{1/2}(\Gamma)$, $\tilde f\in \betti \mathcal V'$ \bl  is defined as the null-mean value solution of the problem
\begin{align}\label{f:tilde}
\Delta \tilde f=0\;\;\text{ on }\Om^+\cup\Om^-,\;\;\;\tilde f=f\;\;\text{ on }\Gamma,\;\;\;\partial_n\tilde f=0\;\;\text{ on }\partial\Om. 
\end{align}
%
We consider the inner product on $H^{1/2}(\Gamma)$
\begin{align}\label{scalar.H12}
( u,v)_{H^{1/2}(\Gamma)}=\int_\Om \nabla T(u)\cdot \nabla T(v)dx\;\;\;\forall u,v \in H^{1/2}(\Gamma),
\end{align}
which induces the seminorm $\|\cdot\|_{H_n^{1/2}(\Gamma)}$. It is easy to observe that $\|f\|_{H_n^{1/2}(\Gamma)}=0$ if and only if $f$ is constant, and thus if we note by $\sim$ the equivalence relation $f_1\sim f_2$ iff $f_1-f_2$ is constant on $\Gamma$, we see that $H^{1/2}(\Gamma)/\sim$ is a Hilbert space with scalar product \eqref{scalar.H12}. Hence  we have \bl 
$$H_n^{1/2}(\Gamma)=H^{1/2}(\Gamma)/\sim.$$

 We  denote \bl  by $H_n^{-1/2}(\Gamma)$ the dual space of $H_n^{1/2}(\Gamma)$.
We now introduce the Laplace operator restricted to $\Gamma$, namely $-\Delta_\Gamma:H_n^{1/2}(\Gamma)\rightarrow H_n^{-1/2}(\Gamma)$, defined as
\begin{align}\label{Delta:G}
 -\Delta_\Gamma(f):=\left[\frac{\partial \tilde f}{\partial n}\right]\;\;\;\;\forall f \in H_n^{1/2}(\Gamma).
\end{align}
Here we have use the following notation: for all $f\in H^{1/2}(\Gamma)$, if $f^\pm$ are the two restrictions of $\tilde f$ in \eqref{f:tilde} to $\Om^\pm$ respectively, $[\frac{\partial \tilde f}{\partial n}]\in H_n^{-1/2}(\Gamma)$ is the jump of the normal derivative of $\tilde f$ on $\Gamma$, i.e. $[\frac{\partial \tilde f}{\partial n}]:=\frac{\partial f^+}{\partial n}-\frac{\partial f^-}{\partial n}$. This is well defined in $H_n^{-1/2}(\Gamma)$ and coincides with the distribution
\begin{align}\label{2.7}
 \langle\left[\frac{\partial \tilde f}{\partial n}\right],\varphi\rangle=\int_{\Om}\nabla \tilde f\cdot\nabla \bar\varphi dx\;\;\;\;\forall \varphi \in H_n^{-1/2}(\Gamma),
\end{align}
where $\bar\varphi\in V$ is an arbitrary extension of $\varphi$. 
In particular, we can always choose $\bar \varphi=T(\varphi)$, so that, taking $\varphi=f$ it is also readly seen that  $-\Delta_\Gamma$ is a positive operator.

It is immediately seen that the scalar product \eqref{scalar.H12} can be equivalently rewritten as
\begin{align}\label{scalar.H12*}
 (u,v)_{H_n^{1/2}(\Gamma)}\bl=-\langle (\Delta_\Gamma u),v\rangle\;\;\;\forall u,v \in H^{1/2}(\Gamma).
\end{align}
 The following lemma is proved in \cite[Lemma 2.1]{Le}:
\begin{lemma}\label{lemma1}
 \begin{itemize}
  \item[(i)] For all $u\in H_n^{-1/2}(\Gamma)$ there exists a unique $u^*\in H_n^{1/2}(\Gamma)$ such that $\|u\|_{H_n^{-1/2}(\Gamma)}=\|u^*\|_{H_n^{1/2}(\Gamma)}$ and
  $$\langle u,v\rangle=( u^*,v)_{H_n^{1/2}(\Gamma)},$$
  for all $v\in H_n^{1/2}(\Gamma)$. Moreover $-\Delta_\Gamma u^*=u$, so that by uniqueness we can write $u^*=-\Delta_\Gamma^{-1}u$.
  \item[(ii)] $H_n^{-1/2}(\Gamma)$ is a Hilbert space with inner product
  $$ \langle u,v\rangle=(\Delta_\Gamma^{-1}u,\Delta_\Gamma^{-1}v)_{H_n^{1/2}(\Gamma)},\bl$$
  for all $u,v\in H_n^{-1/2}(\Gamma)$.
 \end{itemize}

\end{lemma}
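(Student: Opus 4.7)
The plan is to derive both statements from the Riesz representation theorem applied to the Hilbert space $(H^{1/2}_n(\Gamma),(\cdot,\cdot)_{H^{1/2}_n(\Gamma)})$, combined with the already-established identity \eqref{scalar.H12*}. First I would check that $H^{1/2}_n(\Gamma)$ is indeed a Hilbert space: the form \eqref{scalar.H12} is positive-definite on the quotient $H^{1/2}(\Gamma)/\!\sim$, and completeness follows because the harmonic extension $T$ is a linear topological isomorphism of that quotient onto the closed subspace of $\mathcal V'$ consisting of null-mean harmonic functions, whose $V$-norm coincides (on this subspace) with the one induced by \eqref{scalar.H12}. Since $H^{-1/2}_n(\Gamma)$ is defined as its topological dual, the Riesz theorem yields, for every $u\in H^{-1/2}_n(\Gamma)$, a unique $u^*\in H^{1/2}_n(\Gamma)$ with
\[
\langle u,v\rangle=(u^*,v)_{H^{1/2}_n(\Gamma)} \quad \text{for all } v\in H^{1/2}_n(\Gamma),
\]
and with $\|u^*\|_{H^{1/2}_n(\Gamma)}=\|u\|_{H^{-1/2}_n(\Gamma)}$; this gives the first half of (i).

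Next I would identify $u^*$ with $-\Delta_\Gamma^{-1}u$ by comparing the Riesz identity above to \eqref{scalar.H12*}, which reads $(u^*,v)_{H^{1/2}_n(\Gamma)}=-\langle\Delta_\Gamma u^*,v\rangle$. The two identities together force $\langle u,v\rangle=\langle-\Delta_\Gamma u^*,v\rangle$ for every $v\in H^{1/2}_n(\Gamma)$, hence $u=-\Delta_\Gamma u^*$ in $H^{-1/2}_n(\Gamma)$. The uniqueness part of Riesz then shows that $-\Delta_\Gamma$ is a bijection between $H^{1/2}_n(\Gamma)$ and $H^{-1/2}_n(\Gamma)$, so the notation $u^*=-\Delta_\Gamma^{-1}u$ is unambiguous, finishing (i).

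For (ii), the map $u\mapsto u^*=-\Delta_\Gamma^{-1}u$ is by construction a linear isometric bijection from $H^{-1/2}_n(\Gamma)$ onto the Hilbert space $H^{1/2}_n(\Gamma)$; it therefore transports the Hilbert structure, giving
\[
(u,v)_{H^{-1/2}_n(\Gamma)}:=(u^*,v^*)_{H^{1/2}_n(\Gamma)}=(\Delta_\Gamma^{-1}u,\,\Delta_\Gamma^{-1}v)_{H^{1/2}_n(\Gamma)},
\]
the two minus signs cancelling by bilinearity. Completeness of $H^{-1/2}_n(\Gamma)$ for this inner product is then automatic.

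I do not expect a substantive obstacle in this argument, as it is essentially a clean packaging of the Riesz theorem with the pre-existing identity \eqref{scalar.H12*} and the standard verification that $H^{1/2}_n(\Gamma)$ is complete. The only mildly delicate point is ensuring that \eqref{scalar.H12*} holds for every $v\in H^{1/2}_n(\Gamma)$ rather than only for smooth test functions, but this follows at once from the density of smooth traces in $H^{1/2}_n(\Gamma)$ together with the continuity in $v$ of both sides of \eqref{scalar.H12*}.
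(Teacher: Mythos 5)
Your argument is correct and is the standard one: the paper itself does not prove this lemma but simply cites \cite[Lemma 2.1]{Le}, and the Riesz-representation argument on the Hilbert space $(H^{1/2}_n(\Gamma),(\cdot,\cdot)_{H^{1/2}_n(\Gamma)})$ combined with \eqref{scalar.H12*} is exactly the intended route. The only remark is that your final worry about extending \eqref{scalar.H12*} from smooth traces is unnecessary, since that identity already holds for every $v\in H^{1/2}_n(\Gamma)$ by construction, being \eqref{2.7} with the choice $\bar\varphi=T(\varphi)$.
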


Notice that $f\in H_n^{-1/2}(\Gamma)$ can be naturally seen as an element $T^*f\in V'$ by the relation
\begin{align}\label{relation}
\langle T^*f,\varphi\rangle=\langle f,\varphi\llcorner_\Gamma\rangle, 
\end{align}
for all $\varphi\in V$.  
Thus $H_n^{-1/2}(\Gamma)$ is isomorphic to a subspace of $V'_\Gamma\subset H_n^{-1}(\Om)$ defined as 
 \begin{equation}
 V'_\Gamma:=\{u\in  H_n^{-1}(\Om)\cap \mathcal D'(\Gamma)\}=\{ u\in  H_n^{-1}(\Om):\text{supp}u\subset \Gamma\}.
 \end{equation}
 The isomorphism is exactly the map $T^*:H_n^{-1/2}(\Gamma)\rightarrow V'_\Gamma$.
 We define $$V_\Gamma:=A^{-1}(V'_\Gamma)=\{u\in H_n^{1}(\Om):\text{supp}Au\subset \Gamma\}.$$
 The space $V_\Gamma$ is isomorphic to $H_n^{1/2}(\Gamma)$ via the isomorphism $T:H_n^{1/2}(\Gamma)\rightarrow V_\Gamma$ introduced in \eqref{f:tilde}.

Assertion (i) of the previous lemma has the following consequence: 
\begin{lemma}\label{lemma:invDelta1}
 Let $f\in H^{-1/2}(\Gamma)$, then $T\circ{(-\Delta_\Gamma)^{-1} f}=\mathcal N\circ T^*f$.
\end{lemma}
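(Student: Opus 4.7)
The plan is to verify the identity directly from the definitions, using the distributional formula \eqref{2.7} to rewrite the jump of the normal derivative as the action of the Laplacian on the harmonic extension.

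First I would set $u^* := (-\Delta_\Gamma)^{-1} f \in H_n^{1/2}(\Gamma)$, which is well-defined by Lemma~\ref{lemma1}(i), and let $w := T(u^*) = \widetilde{u^*} \in \mathcal{V}$ be its harmonic Neumann extension defined by \eqref{f:tilde}. I must show that $w = \mathcal{N}(T^* f)$, which, since $w$ already has zero mean value, is equivalent to showing that
\begin{equation*}
A w = T^* f \quad \text{in } V'.
\end{equation*}

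To check this, I would test against an arbitrary $\varphi \in V$ and compute
\begin{equation*}
\langle A w, \varphi \rangle = \int_\Omega \nabla \widetilde{u^*} \cdot \nabla \varphi \, dx.
\end{equation*}
Since $\varphi$ itself is an admissible extension of its own trace $\varphi\llcorner_\Gamma \in H^{1/2}(\Gamma)$, I would apply the distributional identity \eqref{2.7} (with $\bar\varphi = \varphi$) to get
\begin{equation*}
\int_\Omega \nabla \widetilde{u^*} \cdot \nabla \varphi \, dx = \Big\langle \Big[\tfrac{\partial \widetilde{u^*}}{\partial n}\Big], \varphi\llcorner_\Gamma \Big\rangle = \langle -\Delta_\Gamma u^*, \varphi\llcorner_\Gamma \rangle = \langle f, \varphi\llcorner_\Gamma \rangle,
\end{equation*}
where in the last two steps I have used the definition \eqref{Delta:G} of $-\Delta_\Gamma$ and the fact that $u^* = (-\Delta_\Gamma)^{-1} f$. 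The right-hand side is by definition \eqref{relation} precisely $\langle T^* f, \varphi \rangle$, which establishes $A w = T^* f$ and thus $w = \mathcal{N}(T^* f)$.

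I do not anticipate any real obstacle: the lemma is essentially a bookkeeping identity reconciling the single-layer representation of the Dirichlet-to-Neumann operator on $\Gamma$ with the inverse Neumann Laplacian on $\Omega$. The one point that needs a quick check is that $T^* f$ lies in the domain of $\mathcal{N}$, namely that $\langle T^* f, 1\rangle = 0$; this is automatic because $f \in H_n^{-1/2}(\Gamma)$ annihilates constants (since $H_n^{1/2}(\Gamma) = H^{1/2}(\Gamma)/{\sim}$), and because the constructed $w$ already has zero mean by the definition of $T$.
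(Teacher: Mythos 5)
Your argument is correct and is essentially the paper's own proof run in the opposite direction: the paper starts from $\langle T^*f,\varphi\rangle$, unfolds it via the definition of $-\Delta_\Gamma$ and formula \eqref{2.7} into $\int_\Omega\nabla g\cdot\nabla\varphi\,dx$, and concludes by the definition of $\mathcal N$, which is exactly your chain of equalities read backwards. The extra remark that $T^*f$ annihilates constants (so that $\mathcal N$ applies) is a sensible bookkeeping check the paper leaves implicit.
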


\begin{proof}
For all $\varphi\in H^1_n(\Omega)$ we have by (i) of Lemma \ref{lemma1} 
\begin{align}
  \langle T^*f,\varphi\rangle=\langle f,\varphi\llcorner_\Gamma\rangle=\langle-\Delta_\Gamma(-\Delta_\Gamma)^{-1}f,\varphi\rangle,
\end{align}
and, denoting by $g=T\circ{(-\Delta_\Gamma)^{-1} f}$, the last term equals $
 \int_\Gamma[\frac{\partial g}{\partial n}]\varphi d\mathcal H^{d-1}$. Therefore integrating by parts, or in other words using formula \eqref{2.7}, we get
\begin{align}
 \langle T^*f,\varphi\rangle=\int_\Omega\nabla g\cdot\nabla\varphi dx.
\end{align}
Therefore by the definition of $\mathcal N$ and the arbitrariness of $\varphi\in H^1_n(\Omega)$ it follows that $g=\mathcal N T^*f$, that is the thesis. 
\end{proof}
%
 Lemma \ref{lemma:invDelta1} says exactly that $(-\Delta_\Gamma)^{-1}=T^{-1}\circ\mathcal N\circ T^*$, i.e., $-\Delta_\Gamma=(T^*)^{-1}\circ A\circ T$.

\begin{lemma}\label{lemma:invDelta2}
 Let $f\in H_n^{-1}(\Om)\cap H$ and let $g:=\mathcal N f$. Then $[\frac{\partial g}{\partial n}]=0$ on $\Gamma$.
\end{lemma}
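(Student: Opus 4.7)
The plan is a two-step argument: first upgrade the regularity of $g$, then invoke a trace observation. The hypothesis $f\in H_n^{-1}(\Om)\cap H$ places $f$ in $\mathcal H=H\cap\mathcal V'$ (an $L^2(\Om)$ function of zero mean), so, by the isomorphism property $\mathcal N:\mathcal H\to\mathcal W$ recalled in Section \ref{setting}, $g=\mathcal N f$ is the unique zero-mean solution of the Neumann problem $-\Delta g=f$ in $\Om$, $\partial_n g=0$ on $\partial\Om$, and moreover $g\in\mathcal W=H^2_n(\Om)\cap\mathcal V'$.

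The crucial observation is that this regularity gives $g\in H^2(\Om)$ \emph{globally} on $\Om$, not merely piecewise on $\Om^+\cup\Om^-$. Hence $\nabla g\in H^1(\Om;\R^3)$, and the standard trace theorem applied to the smooth interface $\Gamma$ delivers a single-valued trace of $\nabla g$ on $\Gamma$. In particular, the unilateral normal derivatives $(\partial_n g)^+$ and $(\partial_n g)^-$ coming from $\Om^+$ and $\Om^-$ coincide as elements of $H^{1/2}(\Gamma)$, so
\[\left[\frac{\partial g}{\partial n}\right]=(\partial_n g)^+-(\partial_n g)^-=0\qquad\text{in }H_n^{-1/2}(\Gamma),\]
which is the claim.

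A fully equivalent dual verification proceeds in the spirit of \eqref{2.7}: for every $\varphi\in V$, integrating by parts separately on $\Om^+$ and $\Om^-$ and using $\partial_n g=0$ on $\partial\Om$ gives
\[\int_\Om\nabla g\cdot\nabla\varphi\,dx=\int_\Om(-\Delta g)\,\varphi\,dx+\left\langle\left[\frac{\partial g}{\partial n}\right],\varphi\llcorner_\Gamma\right\rangle;\]
substituting $-\Delta g=f$ together with the identity $\int_\Om\nabla g\cdot\nabla\varphi\,dx=\int_\Om f\,\varphi\,dx$, which is the very definition of $\mathcal N$, then forces the surface term to vanish for every $\varphi$, whence $[\partial_n g]=0$ on $\Gamma$. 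I do not anticipate any serious obstacle: the lemma merely records the absence of a singular surface contribution to the distributional Laplacian of a globally $H^2$ function, and the only ingredients it relies on are the elliptic regularity statement $\mathcal N:\mathcal H\to\mathcal W$ and the smoothness of $\Gamma$, both already in force.
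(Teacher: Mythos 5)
Your proposal is correct, and in fact it contains two arguments. The second one (the ``dual verification'') is precisely the paper's own proof: one computes $\langle f,\varphi\rangle$ once via the defining identity $\langle f,\varphi\rangle=\int_\Om\nabla g\cdot\nabla\varphi\,dx$ of $\mathcal N$, and once by writing $\int_\Om f\varphi\,dx$ as $\int_{\Om^+}(-\Delta g)\varphi+\int_{\Om^-}(-\Delta g)\varphi$ and integrating by parts on each subdomain, so that the surface term $\langle[\partial_n g],\varphi\llcorner_\Gamma\rangle$ is forced to vanish for all $\varphi$. Your first argument is a genuinely more direct route that the paper does not spell out: since $f\in\mathcal H$, elliptic regularity ($\mathcal N:\mathcal H\to\mathcal W=H^2_n(\Om)\cap\mathcal V'$, recalled in Section \ref{setting}) gives $g\in H^2(\Om)$ globally, hence $\nabla g\in H^1(\Om;\R^3)$ has a single, side-independent trace on the smooth surface $\Gamma$, and the jump of the normal derivative is zero by definition. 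This version makes transparent \emph{why} the lemma holds (no singular surface part in $\Delta g$ because $g$ is globally $H^2$), whereas the paper's duality computation is the one that meshes directly with the weak formulation \eqref{2.7} used elsewhere; note also that the paper's integration by parts on $\Om^\pm$ itself tacitly relies on the same regularity of $g$ that you invoke explicitly, so the two arguments are not really independent in their hypotheses. The only cosmetic caveat is that $H_n^{-1}(\Om)\cap H$ should be read as identifying $f$ with a zero-mean $L^2$ function (or replacing $f$ by $f-f_\Omega$) so that $\mathcal N f$ and the test against general $\varphi\in V$ are consistent; this affects your proof and the paper's equally.
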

\begin{proof}
 By definition we have
 \begin{align}
\langle f,\varphi\rangle=\int_\Om \nabla g\cdot \nabla\varphi dx,  
 \end{align}
 for all $\varphi\in H^1_n(\Omega)$. On the other hand we have
 \begin{align*}
  \langle f,\varphi\rangle&=\int_\Om f\varphi dx=\int_{\Om^+} A\mathcal Nf\varphi dx+\int_{\Om^-} A\mathcal Nf\varphi dx\\
\nonumber
&=-\int_\Gamma\left[\frac{\partial g}{\partial n}\right]\varphi d\mathcal H^{d-1}+\int_{\Om} \nabla g\cdot \nabla\varphi dx.
 \end{align*}
from which the thesis follows.
\end{proof}

\section{Energies and preliminary results}\label{energies}
Let $\Om$ be an open and bounded smooth set in $\R^3$ and $s\geq 1$\bl. We consider the functional  $M^\eps$ defined on $V$ as
\begin{align}
 M^\eps(u):=\int_\Om\eps^{-1}W(u)+\frac{\eps}{2}|\nabla u|^2dx.
\end{align}
We also define, for all  $u\in L^1(\Om)$, the energy
\begin{align}
 M^0(u):=\begin{cases}
                       c_W \H^{n-1}(\Gamma)&\text{if }u\in BV(\Om,\{-1,1\})\\
                       +\infty&\text{otherwise}\\
         \end{cases}
\end{align}
where $\Gamma$ is the boundary of the set $\{u=1\}$, that is, the interface between the two phases of $u$, namely $\{u=\pm 1\}$, and 
$$c_W:=\int_{-1}^1W(s)ds.$$
The following Theorem is well-known and first proved by Modica and Mortola \cite{MoMo}:
\begin{theorem}\label{Modica}
The functionals $M^\eps(u)$
 gamma \bl converge in $L^1(\Om)$ to $M^0(u)$.
\end{theorem}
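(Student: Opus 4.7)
The plan is to establish the two standard inequalities characterizing $\Gamma$-convergence, namely the $\liminf$ inequality along arbitrary $L^1$-converging sequences, and the existence of a recovery sequence attaining the limit energy.

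For the $\liminf$ inequality, I would use the classical Modica--Mortola trick. Let $u_\eps\to u$ in $L^1(\Om)$ with $\liminf M^\eps(u_\eps)<+\infty$. Introducing the primitive
\[
 \Phi(t):=\int_{-1}^t \sqrt{2W(s)}\,ds,
\]
the elementary inequality $a^2+b^2\geq 2ab$ applied with $a=\sqrt{\eps/2}\,|\nabla u_\eps|$ and $b=\sqrt{W(u_\eps)/\eps}$ gives
\[
 M^\eps(u_\eps)\;\geq\;\int_\Om \sqrt{2W(u_\eps)}\,|\nabla u_\eps|\,dx \;=\;\int_\Om|\nabla(\Phi\circ u_\eps)|\,dx.
\]
Since $\Phi$ is continuous and $u_\eps\to u$ in $L^1$, one has $\Phi(u_\eps)\to\Phi(u)$ in $L^1$, so by lower semicontinuity of total variation
\[
 \liminf_{\eps\to 0} M^\eps(u_\eps)\;\geq\;|D(\Phi\circ u)|(\Om).
\]
A standard bound using Fatou's lemma applied to $W(u_\eps)/\eps$ forces $W(u)=0$ a.e., hence $u\in\{-1,1\}$ a.e.; thus $u\in BV(\Om,\{-1,1\})$ and $\Phi\circ u$ jumps by $|\Phi(1)-\Phi(-1)|=c_W$ across $\Gamma=\partial^*\{u=1\}$, yielding $|D(\Phi\circ u)|(\Om)=c_W\mathcal H^{n-1}(\Gamma)=M^0(u)$.

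For the $\limsup$ inequality, it suffices to build a recovery sequence for every $u\in BV(\Om,\{-1,1\})$. The plan is to first treat the case when $\Gamma=\partial\{u=1\}\cap\Om$ is smooth (say, of class $C^2$), and then use a density/diagonal argument to recover the general $BV$ case, relying on the well-known fact that sets of finite perimeter can be approximated in perimeter by smooth sets. In the smooth case, let $d(x)$ denote the signed distance to $\Gamma$ and let $\gamma\colon\R\to(-1,1)$ be the one-dimensional optimal profile solving $\gamma'=\sqrt{2W(\gamma)}$, $\gamma(\pm\infty)=\pm 1$. Define
\[
 u_\eps(x):=\gamma\!\left(\frac{d(x)}{\eps}\right),
\]
with a cut-off outside a tubular neighborhood of $\Gamma$ to enforce $u_\eps=\pm 1$ far from $\Gamma$ while retaining $L^1$-convergence. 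Using the coarea formula on tubular coordinates around $\Gamma$ and the identity $\sqrt{2W(\gamma)}\,\gamma'=(\gamma')^2 = W(\gamma)+\tfrac12(\gamma')^2$ saturated by the optimal profile, a direct computation gives
\[
 \limsup_{\eps\to 0} M^\eps(u_\eps)\;\leq\; c_W\,\mathcal H^{n-1}(\Gamma),
\]
where the factor $c_W$ arises as $\int_{-\infty}^\infty\bigl(W(\gamma)+\tfrac12(\gamma')^2\bigr)dt=\int_{-1}^1\sqrt{2W(s)}\,ds$ (the paper's normalization of $c_W$ is to be adopted consistently).

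The main technical obstacle is the rigorous reduction from general $BV$ boundaries to smooth ones in the recovery step: one must invoke the approximation of sets of finite perimeter by sets with smooth boundary converging both in $L^1$ and in perimeter, and then combine with a diagonal extraction together with the lower semicontinuity provided by the $\liminf$ part to conclude via the standard density argument in $\Gamma$-convergence. Since the result is already classical (Modica--Mortola \cite{MoMo}), I would simply cite these steps rather than reproduce them, as the paper does.
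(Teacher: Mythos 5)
Your proposal is the classical Modica--Mortola argument (liminf via the primitive $\Phi$ and lower semicontinuity of total variation, limsup via the optimal-profile recovery sequence combined with approximation of finite-perimeter sets by smooth ones), which is precisely the proof the paper invokes by citing \cite{MoMo} without reproducing it; it is correct. Your parenthetical caution about normalization is well taken: the argument produces the surface tension $\int_{-1}^{1}\sqrt{2W(s)}\,ds$, whereas the paper defines $c_W=\int_{-1}^{1}W(s)\,ds$, and for $W(s)=(s^2-1)^2$ these differ, so the discrepancy is an apparent slip in the paper's normalization rather than a gap in your proof.
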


Let us denote by $F$ the functional on $H\times H^s_n(\Om)$  given by
\begin{align}
 F(\varphi, \sigma):=\langle\sigma,\varphi\rangle+\frac{a_s(\sigma,\sigma)}{2}+\int_\Om\frac{\sigma^2}{2}dx.
\end{align}
Let $\eps>0$, the energy functional $E^\eps$ is defined as
\begin{align}\label{E^eps}
 E^\eps(\varphi, \sigma):=\begin{cases}
                           M^\eps(\varphi-\sigma)+F(\varphi, \sigma)&\text{if }(\varphi,\sigma)\in H\times H^s_n(\Om)\betti\text{ and }\varphi-\sigma\in H^1(\Omega)\\
                           +\infty&\text{otherwise.}\\
                          \end{cases}
\end{align}
Standard estimates show that there exists a constant $C=C(\eps)>0$ such that 
\begin{equation}\label{coerc}
 E^\eps(\varphi,\sigma)\geq C\big(\|\varphi\|^2_{V}+\|\sigma\|^2_{H^s_n}\big),
\end{equation}
for all $\varphi\in V$ and $\sigma\in H^s_n(\Om)$. Moreover there exists a constant $C>0$ independent of $\eps\in(0,1)$ such that
\begin{align}\label{unibound}
 E^\eps(\varphi,\sigma)\geq C(\|\varphi\|^2+\|\sigma\|^2_{H^s_n(\Omega)}).
\end{align}
We consider the functional 
\begin{align}\label{E^0}
 E^0(\varphi,\sigma):=\begin{cases}
                       M^0(\varphi-\sigma)+F(\varphi,\sigma)&\text{if }(\varphi-\sigma)\in BV(\Om,\{-1,1\})\text{ and }\sigma\in H^s_n(\Om)\\
                       +\infty&\text{otherwise.}\\
                      \end{cases}
\end{align}

We now study the relation between $E^\eps$ and $E^0$.
\begin{theorem}\label{Gamma:conv}
 The functionals $E^\eps(\varphi,\sigma)$ \betti gamma \bl converge to the functional $E^0(\varphi,\sigma)$ with respect to the $L^1\times L^1$ topology.
\end{theorem}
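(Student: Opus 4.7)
The strategy is to exploit the decomposition $E^\eps(\varphi,\sigma)=M^\eps(\varphi-\sigma)+F(\varphi,\sigma)$, applying the Modica--Mortola result (Theorem \ref{Modica}) to the phase-field piece and treating $F$ as a continuous or lower-semicontinuous perturbation along suitable convergences. The key structural observation is that $M^\eps$ acts on the difference $\varphi-\sigma$, so the coupling with $\sigma$ disappears as soon as uniform coercivity is available, and the $\Gamma$-convergence statement essentially factorizes through that of $M^\eps$.

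For the $\liminf$ inequality, suppose $(\varphi^\eps,\sigma^\eps)\to(\varphi,\sigma)$ in $L^1(\Om)\times L^1(\Om)$ and assume without loss of generality $E^\eps(\varphi^\eps,\sigma^\eps)\le C$. The uniform coercivity estimate \eqref{unibound} yields that $\sigma^\eps$ is bounded in $H^s_n(\Om)$, so up to a subsequence $\sigma^\eps\rightharpoonup\sigma$ in $H^s_n(\Om)$ and strongly in $L^2(\Om)$ by the compact Sobolev embedding. The Modica--Mortola bound $M^\eps(\varphi^\eps-\sigma^\eps)\le C$ forces $\int_\Om W(\varphi^\eps-\sigma^\eps)\,dx\le C\eps$, and since $W(u)=(u^2-1)^2$ this bounds $\varphi^\eps-\sigma^\eps$ in $L^4(\Om)$; combined with the $L^2$ bound on $\sigma^\eps$, this gives $\varphi^\eps\rightharpoonup\varphi$ weakly in $L^2(\Om)$. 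Applying Theorem \ref{Modica} to $u^\eps:=\varphi^\eps-\sigma^\eps\to\varphi-\sigma$ in $L^1(\Om)$ yields $\varphi-\sigma\in BV(\Om,\{-1,1\})$ together with $\liminf_\eps M^\eps(u^\eps)\ge M^0(\varphi-\sigma)$. The cross term $\int_\Om\sigma^\eps\varphi^\eps\,dx$ passes to the limit by pairing strong $L^2$ convergence of $\sigma^\eps$ with weak $L^2$ convergence of $\varphi^\eps$, the quadratic term $\int_\Om(\sigma^\eps)^2/2\,dx$ converges by strong $L^2$ convergence, and $a_s(\sigma^\eps,\sigma^\eps)/2$ is weakly lower semicontinuous on $H^s_n(\Om)$. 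Summing these contributions yields $\liminf_\eps E^\eps(\varphi^\eps,\sigma^\eps)\ge E^0(\varphi,\sigma)$.

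For the recovery sequence, given $(\varphi,\sigma)$ with $E^0(\varphi,\sigma)<\infty$, that is $\sigma\in H^s_n(\Om)$ and $u:=\varphi-\sigma\in BV(\Om,\{-1,1\})$, I would freeze $\sigma^\eps:=\sigma$ for every $\eps$ and take $u^\eps\in H^1(\Om)$ to be a standard Modica--Mortola recovery sequence for $u$, which can be chosen uniformly bounded in $L^\infty(\Om)$. Setting $\varphi^\eps:=u^\eps+\sigma$ one has $\varphi^\eps\to\varphi$ in $L^1(\Om)$ and, by dominated convergence, in $L^2(\Om)$ as well. Then $M^\eps(\varphi^\eps-\sigma^\eps)=M^\eps(u^\eps)\to M^0(u)$, and since $\sigma$ is fixed while the three summands of $F$ are all continuous under this mode of convergence, we also have $F(\varphi^\eps,\sigma^\eps)\to F(\varphi,\sigma)$, hence $\limsup_\eps E^\eps(\varphi^\eps,\sigma^\eps)\le E^0(\varphi,\sigma)$.

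The main obstacle is the cross term $\int_\Om\sigma\varphi\,dx$ in $F$, which is not continuous under the $L^1\times L^1$ topology alone: the argument hinges on upgrading the convergence of $\sigma^\eps$ to strong $L^2$ via the compact embedding $H^s_n(\Om)\hookrightarrow L^2(\Om)$ and simultaneously extracting an $L^2$ bound for $\varphi^\eps$ from the Modica--Mortola energy through the $L^4$ control provided by the double-well potential. Once these coercivity steps are secured, the rest reduces to Theorem \ref{Modica} and the standard weak lower semicontinuity of the bilinear form $a_s$.
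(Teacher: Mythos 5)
Your proof is correct and takes essentially the same route as the paper's: the same splitting $E^\eps=M^\eps(\varphi-\sigma)+F(\varphi,\sigma)$, the Modica--Mortola theorem for the $\liminf$ of the phase-field part, coercivity of $W$ plus the compact embedding $H^s_n(\Om)\hookrightarrow L^2(\Om)$ to handle $F$, and the frozen-$\sigma$ recovery sequence $\varphi^\eps=u^\eps+\sigma$ for the $\limsup$. The only cosmetic difference is that in the $\liminf$ you pair weak $L^2$ convergence of $\varphi^\eps$ against strong $L^2$ convergence of $\sigma^\eps$, whereas the paper upgrades $\varphi^\eps-\sigma^\eps$ to strong $L^2$ convergence before passing to the limit in $F$; both versions are valid.
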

\begin{proof}
 \textbf{Liminf inequality.} Let $(\varphi_\eps,\sigma_\eps)$ be a sequence converging to $(\varphi,\sigma)$ in $L^1(\Om)\times L^1(\Om)$. We must demonstrate that 
 \begin{equation}
  \liminf_{\eps\rightarrow0}E^\eps(\varphi_\eps,\sigma_\eps)\geq E^0(\varphi,\sigma).
 \end{equation}
So, let us assume the left-hand side being equal to a finite real number $M>0$. In particular condition \eqref{unibound} implies that, up to a subsequence, $\sigma^\eps\rightharpoonup\sigma$ weakly in $H^s_n(\Om)$. Moreover, from the boundedness $
 W(\varphi^\eps-\sigma^\eps)\leq \eps M$ and  the growth condition of $W$ we infer $\varphi^\eps-\sigma^\eps\rightarrow\varphi-\sigma$ strongly in $H$. This, together with the convergence of $\sigma^\eps$, provides $\varphi^\eps\rightarrow\varphi$ strongly in $H$.
Now, thanks to Theorem \ref{Modica}, we already know that 
\begin{equation}
  \liminf_{\eps\rightarrow0}M^\eps(\varphi_\eps-\sigma_\eps)\geq M^0(\varphi-\sigma).
 \end{equation}
 Thus the thesis follows from the semicontinuity inequality
 \begin{equation}
  \liminf_{\eps\rightarrow0}F(\varphi_\eps,\sigma_\eps)\geq F(\varphi,\sigma),
 \end{equation}
 that holds true thanks to the strong convergences in $H$ of both $\varphi^\eps$ and $\sigma^\eps$, and the weak convergence of $\sigma^\eps$ in $H^s_n(\Om)$.
 
 \textbf{Limsup inequality.} Let $(\varphi,\sigma)$ be such that $E^0(\varphi,\sigma)<+\infty$. By Theorem \ref{Modica} there exists a sequence $u^\eps\rightarrow u:=\varphi-\sigma$ such that 
\begin{equation}
  \limsup_{\eps\rightarrow0}M^\eps(u^\eps)\leq M^0(u).
 \end{equation}
 For all $\eps\in(0,1)$ we then set $\sigma^\eps:=\sigma$ and $\varphi^\eps:=u^\eps+\sigma^\eps$. Again by the coerciveness properties of $M^\eps$ we obtain $u^\eps\rightarrow u$ strongly in $H$, and we easily find out 
\begin{equation}
  \lim_{\eps\rightarrow0}F(\varphi^\eps,\sigma^\eps)=F(\varphi,\sigma),
 \end{equation}
 from which the thesis follows.
\end{proof}

\section{Existence of approximate gradient flow}\label{appr_gradient}
In this section we show the existence of solutions to the approximate gradient flow
\begin{align}
 (\dot\varphi^\eps,\dot\sigma^\eps)=-\nabla_{X\times Y}E(\varphi^\eps,\sigma^\eps).
\end{align}
We want to take the topologies  $X:=H^{-s}_n(\Om)$ and $Y:=H$\bl. The corresponding system of equations is
\begin{align}\label{gf:eps}
 \begin{cases}
  &\dot\varphi^\eps=-A^s\Big(\frac{1}{\eps}W'(\varphi^\eps-\sigma^\eps)-\eps\Delta(\varphi^\eps-\sigma^\eps)\Big)-A^s\sigma^\eps,\\
  &\dot\sigma^\eps=-\eps\Delta(\varphi^\eps-\sigma^\eps)+\frac{1}{\eps}W'(\varphi^\eps-\sigma^\eps)-A^s\sigma^\eps-\sigma^\eps-\varphi^\eps.
 \end{cases}
\end{align}
The existence theorem is stated as follows.
\begin{theorem}\label{exist:eps}
 Let $\eps\in(0,1)$ and $T>0$. For all $(\varphi^\eps_0,\sigma^\eps_0)\in H_n^1(\Om)\times H_n^s(\Om)$ there exist $(\varphi^\eps,\sigma^\eps)$ with
 \begin{align}
  &\varphi^\eps\in L^\infty(0,T; H^1_n(\Om)\bl)\cap H^1(0,T;H^{-s}_n(\Om)),\\
  &\sigma^\eps\in L^\infty(0,T;H_n^s(\Om))\cap H^1(0,T;H),
 \end{align}
satisfying \eqref{gf:eps} a.e. on $[0,T]$, with initial conditions $(\varphi^\eps(0),\sigma^\eps(0))=(\varphi^\eps_0,\sigma^\eps_0)$. Moreover we have
\begin{align}\label{+reg}
 \varphi^{\eps}\in L^2(0,T;H^2(\Om))\;\;\text{ and }\;\sigma^{\eps}\in L^2(0,T;H_n^{2s}(\Om)),\quad v^\eps\in L^2(0,T;H_n^{s}(\Om)),
\end{align}
where $v^\eps:=\frac{W'(u^\eps)}{\eps}-\eps\Delta u^\eps$.
\end{theorem}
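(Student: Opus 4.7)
The plan is to construct a solution of \eqref{gf:eps} through a minimizing movements (De Giorgi variational) scheme that respects the gradient-flow structure in $X\times Y=H^{-s}_n(\Om)\times H$. Fix $\eps\in(0,1)$ and a time step $\tau=T/N$, set $(\varphi^0_\tau,\sigma^0_\tau):=(\varphi^\eps_0,\sigma^\eps_0)$, and inductively define $(\varphi^{n+1}_\tau,\sigma^{n+1}_\tau)$ as a minimizer of
\begin{align*}
(\varphi,\sigma)\mapsto E^\eps(\varphi,\sigma)+\frac{1}{2\tau}\|\varphi-\varphi^n_\tau\|^2_{H^{-s}_n}+\frac{1}{2\tau}\|\sigma-\sigma^n_\tau\|^2_H.
\end{align*}
Since for $s\geq 1$ finiteness of $E^\eps$ forces the effective domain into $V\times H^s_n(\Om)$, the coercivity bound \eqref{coerc} makes the penalized functional coercive on this product space, and weak lower semicontinuity follows in a standard way from the weak continuity of $\int W(\varphi-\sigma)/\eps$ on $V$ (via the compact embedding $V\hookrightarrow L^4$), the convexity of the Dirichlet term and of $a_s$, and the weak continuity of the cross term $\int\sigma\varphi$. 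Minimizers thus exist at every step.

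Writing the Euler-Lagrange conditions and introducing the discrete chemical potential $v^{n+1}_\tau:=\eps^{-1}W'(\varphi^{n+1}_\tau-\sigma^{n+1}_\tau)-\eps\Delta(\varphi^{n+1}_\tau-\sigma^{n+1}_\tau)\in V'$ yields the discrete system
\begin{align*}
\frac{\varphi^{n+1}_\tau-\varphi^n_\tau}{\tau}&=-A^s v^{n+1}_\tau-A^s\sigma^{n+1}_\tau,\\
\frac{\sigma^{n+1}_\tau-\sigma^n_\tau}{\tau}&=v^{n+1}_\tau-A^s\sigma^{n+1}_\tau-\sigma^{n+1}_\tau-\varphi^{n+1}_\tau.
\end{align*}
Testing the minimality property against $(\varphi^n_\tau,\sigma^n_\tau)$ produces the discrete energy-dissipation inequality
\begin{align*}
E^\eps(\varphi^{n+1}_\tau,\sigma^{n+1}_\tau)+\frac{1}{2\tau}\|\varphi^{n+1}_\tau-\varphi^n_\tau\|^2_{H^{-s}_n}+\frac{1}{2\tau}\|\sigma^{n+1}_\tau-\sigma^n_\tau\|^2_H\leq E^\eps(\varphi^n_\tau,\sigma^n_\tau),
\end{align*}
and summing over $n$ while using \eqref{coerc} and \eqref{unibound} delivers, uniformly in $\tau$, bounds of $\varphi^n_\tau$ in $L^\infty(0,T;V)$, of $\sigma^n_\tau$ in $L^\infty(0,T;H^s_n)$, and of the corresponding discrete time-derivatives in $L^2(0,T;H^{-s}_n)$ and $L^2(0,T;H)$.

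Forming the piecewise-constant and piecewise-affine interpolants in time, I would then pass to the limit $\tau\to 0$. By the Aubin-Lions lemma, together with the compact three-dimensional embedding $V\hookrightarrow L^p$ for every $p<6$, the difference $\varphi_\tau-\sigma_\tau$ converges strongly in $L^2(0,T;L^p(\Om))$; since $W'(u)=4u^3-4u$ has cubic growth this is enough to identify the limit of $W'(\varphi_\tau-\sigma_\tau)$ in $L^2(0,T;H)$, while the remaining linear terms and the discrete time-derivatives pass to the limit by weak convergence. The resulting pair $(\varphi^\eps,\sigma^\eps)$ solves \eqref{gf:eps} in the announced regularity class, with $v^\eps$ weakly recovered as the limit of $v^{n+1}_\tau$ through the first equation.

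For the improved regularity \eqref{+reg} I would bootstrap directly from \eqref{gf:eps}. Rewriting the first equation as $A^sv^\eps=-\dot\varphi^\eps-A^s\sigma^\eps$, whose right-hand side lies in $L^2(0,T;H^{-s}_n)$, and applying $A^{-s}$ gives $v^\eps=-A^{-s}\dot\varphi^\eps-\sigma^\eps\in L^2(0,T;H^s_n)$. Neumann elliptic regularity applied to $-\eps\Delta u^\eps=v^\eps-\eps^{-1}W'(u^\eps)$ for $u^\eps:=\varphi^\eps-\sigma^\eps$, with right-hand side bounded in $L^2(0,T;H)$ thanks to $v^\eps\in L^2(0,T;H)$ and the $L^\infty_tL^6_x$ bound on $u^\eps$ inherited from the energy, then yields $u^\eps\in L^2(0,T;H^2)$. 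Finally, rearranging the second equation as $A^s\sigma^\eps=v^\eps-\dot\sigma^\eps-\sigma^\eps-\varphi^\eps\in L^2(0,T;H)$ gives $\sigma^\eps\in L^2(0,T;H^{2s}_n)$, whence $\varphi^\eps=u^\eps+\sigma^\eps\in L^2(0,T;H^2)$. The main technical obstacle is a uniform control of the cubic nonlinearity $W'(u^\eps)$ along the discrete scheme; this is resolved by \eqref{coerc}, which provides a uniform $V$-bound on $\varphi^n_\tau-\sigma^n_\tau$ and hence, in dimension three, a uniform $L^6$-bound sufficient for a cubic nonlinearity.
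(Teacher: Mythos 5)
Your proposal is correct and follows the same overall architecture as the paper's proof (time discretization, discrete energy--dissipation estimate, uniform a priori bounds, interpolants, compactness, limit passage, and a final comparison/bootstrap in the limit equations to get \eqref{+reg}), but the discrete scheme itself differs in two respects. First, you minimize jointly in $(\varphi,\sigma)$ at each step, which buys you the discrete energy inequality for free by testing minimality against the previous iterate $(\varphi^n_\tau,\sigma^n_\tau)$; the paper instead uses a staggered scheme (first $\sigma_k$ with $\varphi_{k-1}$ frozen, then $\varphi_k$ with $\sigma_k$ frozen) and has to recover the energy estimate by testing the two Euler conditions \eqref{eul1}--\eqref{eul2} with difference quotients and exploiting convexity inequalities term by term. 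Second, the paper splits $W=\tilde W+\bar W$ with $\tilde W(x)=x^4+1$ convex and $\bar W(x)=-2x^2$ treated explicitly at the previous time level, so that each single-variable subproblem is strictly convex and its minimizer is unique; your fully implicit joint functional is not convex, but this is harmless for existence since, as you note, $\int W(\varphi-\sigma)/\eps$ is weakly continuous on bounded sets of $V$ (quartic growth versus the compact embedding $V\hookrightarrow L^p$, $p<6$), and coercivity comes from \eqref{coerc}; you only lose uniqueness of the discrete minimizers, which is irrelevant here. Your limit passage is in fact slightly more careful than the paper's: you invoke Aubin--Lions to get strong convergence of $u_\tau=\varphi_\tau-\sigma_\tau$ before identifying the limit of the cubic term $W'(u_\tau)$, a step the paper leaves implicit. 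The bootstrap for \eqref{+reg} ($v^\eps=-A^{-s}\dot\varphi^\eps-\sigma^\eps\in L^2(0,T;H^s_n(\Om))$, then Neumann elliptic regularity for $u^\eps$, then comparison in the $\sigma$-equation) coincides with the paper's argument.
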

\begin{proof}

In order to prove Theorem \eqref{exist:eps} we use a standard technique of discretization and an Euler implicit scheme. In the rest of the proof we will drop the label $\eps$ from the formulas.
 It is convenient to write the potential  $W(x)=(x^2-1)^2$ as the sum of a convex and a monotone part, namely $W:=\tilde W+\bar W$, with $\tilde W(x)=x^4+1$ and $\bar W(x)=-2x^2$.
We fix a time $T>0$. Let $n\in\mathbb N$ and define $\tau:=T/n$. Setting $\varphi_0:=\varphi^\eps(0)$ and $\sigma_0:=\sigma^\eps(0)$, we define recursively
\begin{align}
 \sigma_k:=\argmin_{\sigma\in H^s_n(\Omega)} &\frac{1}{\tau}\|\sigma-\sigma_{k-1}\|^2+\frac{1}{\eps}\int_{\Om}\tilde W(\varphi_{k-1}-\sigma)dx\nonumber\\
 &-\frac{1}{\eps}\int_{\Om}\bar W'(\varphi_{k-1}-\sigma_{k-1})\sigma dx+\frac{\eps}{2}\|\nabla(\varphi_{k-1}-\sigma)\|^2\\
 \nonumber
 &+\frac{a_s(\sigma,\sigma)}{2}+\frac{\|\sigma\|^2}{2}+\langle\varphi_{k-1},\sigma\rangle,\\
 \varphi_k:=\argmin_{\varphi\in H^1_n(\Omega)}&\frac{1}{\tau}\|\varphi-\varphi_{k-1}\|_{H_n^{-s}}^2+\frac{1}{\eps}\int_{\Om}\tilde W(\varphi-\sigma_k)dx+\frac{1}{\eps}\int_{\Om}\bar W'(\varphi_{k-1}-\sigma_{k})\varphi dx\nonumber\\
 &+\frac{\eps}{2}\|\nabla(\varphi-\sigma_k)\|^2+\langle\varphi,\sigma_{k}\rangle,
\end{align}
for $k=1,\dots,n$. Notice that the minimizers exist and are unique thanks to the convexity and coerciveness of the functionals. By minimality we get the two Euler conditions
\begin{align}
 &\langle \frac{\sigma_k-\sigma_{k-1}}{\tau},\psi_1\rangle-\langle\frac{1}{\eps}\tilde W'(\varphi_{k-1}-\sigma_{k}),\psi_1\rangle-\langle\frac{1}{\eps}\bar W'(\varphi_{k-1}-\sigma_{k-1}),\psi_1\rangle\nonumber\\
 &-\langle\eps\nabla(\varphi_{k-1}-\sigma_{k}),\nabla\psi_1\rangle+\langle\varphi_{k-1}+\sigma_{k},\psi_1\rangle+a_s(\sigma_k,\psi_1)=0,\label{eul1}\\
 &( \frac{\varphi_k-\varphi_{k-1}}{\tau},\psi_2)_{H^{-s}_n(\Omega)}+\langle\frac{1}{\eps}\tilde W'(\varphi_{k}-\sigma_{k}),\psi_2\rangle\nonumber\\&+\langle\frac{1}{\eps}\bar W'(\varphi_{k-1}-\sigma_{k}),\psi_2\rangle+\langle{\eps}\nabla(\varphi_{k}-\sigma_{k}),\nabla\psi_2\rangle+\langle\sigma_k,\psi_2\rangle=0,\label{eul2}
\end{align}
valid for all $\psi_1\in H_n^s(\Om)$ and $\psi_2\in H_n^1(\Om)$.
Testing \eqref{eul1} by $\psi_1=\tau^{-1}(\sigma_k-\sigma_{k-1})$ and \eqref{eul2} by $\psi_2=\tau^{-1}(\varphi_k-\varphi_{k-1})$, then summing the two expressions and using the inequalities
\begin{align*}
&-\big(\tilde W'(\varphi_{k-1}-\sigma_{k})+\bar W'(\varphi_{k-1}-\sigma_{k-1})\big)(\sigma_k-\sigma_{k-1}) \geq W(\varphi_{k-1}-\sigma_{k})\\
\nonumber
&\qquad\qquad\qquad\qquad\qquad\quad\qquad\qquad\qquad\qquad\qquad\qquad-W(\varphi_{k-1}-\sigma_{k-1}),\\
&\big(\tilde W'(\varphi_{k}-\sigma_{k})+\bar W'(\varphi_{k-1}-\sigma_{k})\big)(\varphi_k-\varphi_{k-1}) \geq W(\varphi_{k}-\sigma_{k})-W(\varphi_{k-1}-\sigma_{k}),
\end{align*}
we get
\begin{align}
 &\|\frac{\sigma_k-\sigma_{k-1}}{\tau}\|^2+\|\frac{\varphi_k-\varphi_{k-1}}{\tau}\|_{H^{-s}_n(\Omega)}^2+\frac{1}{\tau\eps}\int_\Om W(\varphi_{k}-\sigma_{k})dx-\frac{1}{\tau\eps}\int_\Om W(\varphi_{k-1}-\sigma_{k-1})dx \nonumber\\
 &-\langle{\eps}\nabla(\varphi_{k-1}-\sigma_{k}),\frac{\nabla\sigma_k-\nabla\sigma_{k-1}}{\tau}\rangle+\langle{\eps}\nabla(\varphi_{k}-\sigma_{k}),\frac{\nabla\varphi_k-\nabla\varphi_{k-1}}{\tau}\rangle\nonumber\\
 &+\langle\varphi_{k-1}+\sigma_{k},\frac{\sigma_k-\sigma_{k-1}}{\tau}\rangle+\langle\nabla\sigma_k,\frac{\nabla\sigma_k-\nabla\sigma_{k-1}}{\tau}\rangle+\langle\sigma_k,\frac{\varphi_k-\varphi_{k-1}}{\tau}\rangle\nonumber\\
 &+a_s(\sigma_k,\frac{\sigma_k-\sigma_{k-1}}{\tau})\leq0.\label{est:en}
\end{align}
We define the following piecewise affine and constant functions on $[0,T]$
\begin{align}
 &\sigma_\tau(t):=\sigma_{k-1}+\tau^{-1}(t-t_k)(\sigma_k-\sigma_{k-1})\;\;\;\text{ for }t\in[t_{k-1},t_k),\nonumber\\
 &\varphi_\tau(t):=\varphi_{k-1}+\tau^{-1}(t-t_k)(\varphi_k-\varphi_{k-1})\;\;\;\text{ for }t\in[t_{k-1},t_k),\nonumber\\
&\tilde\sigma_\tau(t):=\sigma_{k-1}\;\;\;\text{ for }t\in[t_{k-1},t_k),\nonumber\\
 &\tilde\varphi_\tau(t):=\varphi_{k-1}\;\;\;\text{ for }t\in[t_{k-1},t_k).
 \end{align}
For all $t\in[t_{k-1},t_k)$ it holds
\begin{align}
 \langle\nabla\sigma_k,\frac{\nabla\sigma_k-\nabla\sigma_{k-1}}{\tau}\rangle=\langle\nabla\sigma_\tau(t),\nabla\dot\sigma_\tau(t)\rangle+|t_k-t|\|\nabla\dot\sigma_\tau(t)\|^2,
\end{align}
implying
\begin{align}\label{comp2}
 \int_{t_{k-1}}^{t_k}\langle\nabla\sigma_k,\frac{\nabla\sigma_k-\nabla\sigma_{k-1}}{\tau}\rangle\geq\frac{1}{2}\|\nabla\sigma_k\|^2-\frac{1}{2}\|\nabla\sigma_{k-1}\|^2.
\end{align}
Similarly, we get the estimates
\begin{align}\label{comp3}
 \int_{t_{k-1}}^{t_k}a_s(\sigma_k,\frac{\sigma_k-\sigma_{k-1}}{\tau})\geq\frac{1}{2}a_s(\sigma_k,\sigma_k)-\frac{1}{2}a_s(\sigma_{k-1},\sigma_{k-1}),
\end{align}
and 
\begin{align}\label{comp4}
 \int_{t_{k-1}}^{t_k}\langle\sigma_k,\frac{\sigma_k-\sigma_{k-1}}{\tau}\rangle\geq\frac{1}{2}\|\sigma_k\|^2-\frac{1}{2}\|\sigma_{k-1}\|^2.
\end{align}
Moreover, for $t\in[t_{k-1},t_k)$,
\begin{align*}
 &-\langle\eps\nabla(\varphi_{k-1}-\sigma_{k}),\frac{\nabla\sigma_k-\nabla\sigma_{k-1}}{\tau}\rangle+\langle\eps\nabla(\varphi_{k}-\sigma_{k}),\frac{\nabla\varphi_k-\nabla\varphi_{k-1}}{\tau}\rangle\nonumber\\
 &=\langle\eps\nabla(\varphi_{k}-\sigma_{k}),\nabla\dot\varphi_\tau-\nabla\dot\sigma_\tau\rangle+\tau\eps\langle\nabla\dot\varphi_\tau,\nabla\dot\sigma_\tau\rangle\\
 &=\langle\eps\nabla(\varphi_{\tau}-\sigma_{\tau}),\nabla\dot\varphi_\tau-\nabla\dot\sigma_\tau\rangle+\eps(t_k-t)\|\nabla(\varphi_{\tau}-\sigma_{\tau})\|^2+\tau\eps\langle\nabla\dot\varphi_\tau,\nabla\dot\sigma_\tau\rangle,
\end{align*}
so that integrating on $[t_{k-1},t_k)$ we arrive at
\begin{align}\label{comp}
&\frac{\eps}{2}\|\nabla\varphi_k-\nabla\sigma_k\|^2-\frac{\eps}{2}\|\nabla\varphi_{k-1}-\nabla\sigma_{k-1}\|^2+\frac{\eps\tau^2}{2}\|\nabla\dot\varphi_\tau-\nabla\dot\sigma_\tau\|^2+\eps\tau^2\langle\nabla\dot\varphi_\tau,\nabla\dot\sigma_\tau\rangle\nonumber\\
&=\frac{\eps}{2}\|\nabla\varphi_k-\nabla\sigma_k\|^2-\frac{\eps}{2}\|\nabla\varphi_{k-1}-\nabla\sigma_{k-1}\|^2+\frac{\eps\tau^2}{2}\|\nabla\dot\varphi_\tau\|^2+\frac{\eps\tau^2}{2}\|\nabla\dot\sigma_\tau\|^2.
\end{align}
Then, for $0<K<n$ we integrate over $[t_{k-1},t_k)$ expression \eqref{est:en} and sum over $k=1,\dots,K$, so that,  
taking into account \eqref{comp2}, \eqref{comp3}, \eqref{comp4}, and \eqref{comp}, we infer
\begin{align}\label{gronw}
 &E^\eps(\varphi_\tau(t_K),\sigma_\tau(t_K))+\int_{0}^{t_K}\|\dot\sigma_\tau(t)\|^2+\|\dot\varphi_\tau(t)\|_{H^{-s}_n(\Om)}^2dt\leq E^\eps(\varphi_\tau(0),\sigma_\tau(0)).
\end{align}
This, together with the coerciveness property \eqref{coerc}, implies that there exists a constant $M>0$ independent of $\tau$ such that
\begin{subequations}
\begin{align}
&\|\varphi_\tau\|_{L^\infty(0,T;H_n^1(\Om))}\leq M,\\
&\|\sigma_\tau\|_{L^\infty(0,T;H^s_n(\Om))}\leq M,\\
 &\|\dot\varphi_\tau\|_{L^2(0,T;{H^{-s}_n(\Om)})}\leq M,\\
 &\|\dot\sigma_\tau\|_{L^2(0,T;H)}\leq M.
\end{align}
 \end{subequations}
Therefore we find $\varphi\in L^\infty(0,T;H_n^1(\Om))\cap H^1(0,T;{H^{-s}_n(\Om)})$ and $\sigma\in L^\infty(0,T;H_n^s(\Om))\cap H^1(0,T;H)$ such that, up to a subsequence,
\begin{subequations}\label{conv1}
\begin{align}
& \sigma_\tau\rightharpoonup \sigma\text{ weakly* in } L^\infty(0,T;H_n^s(\Om)),\\
& \varphi_\tau\rightharpoonup \varphi\text{ weakly* in } L^\infty(0,T;H_n^1(\Om)),
\end{align}
\end{subequations}
and
\begin{align}
 &\sigma_\tau\rightharpoonup \sigma\text{ weakly in } H^1(0,T;H),\label{conv3}\\
 &\varphi_\tau\rightharpoonup \varphi\text{ weakly in } H^1(0,T;{H^{-s}_n(\Om)}).\label{conv4}
\end{align}
From \eqref{conv1} we infer
\begin{equation}\label{conv5}
 W'(\varphi_\tau-\sigma_\tau)\rightharpoonup W'(\varphi-\sigma)\text{ weakly* in } L^\infty(0,T;H),
\end{equation}
thanks to the cubic growth of $W'$ and of the Sobolev embedding $V\subset L^6(\Om)$.
Therefore, multiplying \eqref{eul1} and \eqref{eul2} by an arbitrary test function $g\in C(0,T;\mathbb R)$ and integrating on time, we can pass to the limit in $\tau\rightarrow0$ thanks to \eqref{conv1}-\eqref{conv5}, and then the arbitrariness of $g$ entails that, almost everywhere on $[0,T]$,
\begin{align}
 &\langle \dot\sigma,\psi_1\rangle=\langle\frac{1}{\eps}W'(\varphi-\sigma),\psi_1\rangle+\langle{\eps}\nabla(\varphi-\sigma),\nabla\psi_1\rangle-\langle\varphi+\sigma,\psi_1\rangle-a_s(\sigma,\psi_1),\label{eul1:lim}
 \end{align}
 and  
 \begin{align}
 &\langle A^{-s}\dot\varphi,\psi_2\rangle=-\langle\frac{1}{\eps}W'(\varphi-\sigma),\psi_2\rangle-\langle{\eps}\nabla(\varphi-\sigma),\nabla\psi_2\rangle-\langle\sigma,\psi_2\rangle,\label{eul2:lim}
\end{align}
for all $\psi_1\in H_n^s(\Om)$ and $\psi_2\in H_1^s(\Om)$.
From \eqref{eul2:lim} we infer, almost everywhere on $[0,T]$,
\begin{align}\label{4.24}
 &A^{-s}\dot\varphi=-v-\sigma,
\end{align}
where we have set $$v:=\frac{1}{\eps}W'(\varphi-\sigma)-{\eps}\Delta(\varphi-\sigma).$$ Thanks to \eqref{conv3} and \eqref{conv5}, expression \eqref{4.24} implies 
\begin{align}\label{reg1}
 v\in L^2(0,T;H_n^s(\Om))\;\;\text{ and }\;\Delta(\varphi-\sigma)\in L^2(0,T;H)).
\end{align}
On the other hand, comparing all the terms in \eqref{eul1:lim}, we also find 
\begin{align}\label{reg2}
 A^s \sigma\in L^2(0,T;H),
\end{align}
which, together with \eqref{reg1}, implies
\begin{align}
 \varphi\in L^2(0,T;H^2(\Om))\;\;\text{ and }\;\sigma\in L^2(0,T;H_n^{2s}(\Om)).
\end{align}
 
\end{proof}

\section{The sharp interface limit}\label{sharp}
To prove the convergence of the gradient flow considered in the previous section, we have to make some necessary hypotheses on our setting and on the regularity of our solutions.
\subsection{Hypotheses}\label{hypotheses}
 \begin{itemize}
   \item[(HP1)] We assume $s>3/2$. 
 \item[(HP2)] We assume that the functions $u^\eps(t):=\varphi^\eps(t)-\sigma^\eps(t)$ are of class $C^3(\bar\Omega)$ \bl and the limiting interface  $\Gamma$ \bl is of class $C^3$ in the time-space $[0,T^*]\times \Om$ for some $T^*\in(0,T]$.
 \end{itemize}
 
 In the case $H^s(\Om)\subset W^{1,p}(\Om)$ for some $p>d=3$ (which holds true if $s>3/2$, namely under hypothesis (HP1)), since $v^\eps=\frac{W'(u^\eps)}{\eps}-\eps\Delta u^\eps\in L^2(0,T;H^s_n(\Om))$, then \bl the following convergence  in the sense of Radon measures holds  true\bl 
  \begin{align}\label{ipotesi}
   \frac{\eps}{2}|\nabla u^\eps|^2+\frac{W(u^\eps)}{\eps}\rightharpoonup 2c_Wd\mathcal H^{d-1}\llcorner_\Gamma.
  \end{align}
 Thanks to (HP1) this follows from \cite[Theorem 3.2]{RoTon}.
 The last property  implies the following fact, called \textit{equipartition of energy}: in the sense of Radon measures
  \begin{align}
   \Big|\frac{\eps}{2}|\nabla u^\eps|^2-\frac{W(u^\eps)}{\eps}\Big|\rightharpoonup 0.
  \end{align}
  This is proved in \cite[Lemma 1]{LuMo}.
 In \cite{Ton1,Ton2} it is shown that without the restriction $s>3/2$,  formula \eqref{ipotesi} holds with an additional factor $\theta$ at the right-hand side, which is a positive integer-valued function supported on $\Gamma$. Since it might be a non-constant function, hypothesis (HP1) turns out to be necessary. It is conjectured that \eqref{ipotesi} must hold also in the case $s=1$, but this issue is still an open question (see \cite{RoTon}). In particular, in the case that $s\in[1,3/2]$ we have to make the alternative hypothesis
 \begin{itemize}
  \item[(HP1bis)] We assume  $s\in[1,3/2]$ and that \eqref{ipotesi} holds true.
 \end{itemize}
In other words, we assume that the integer-valued function $\theta$ supported on $\Gamma$ is constantly equal to $1$, as conjectured in \cite{RoTon}.
\newline

Let us now consider the approximate gradient flows: for all $\eps>0$ let $(\varphi^\eps,\sigma^\eps)$ be a solution to system \eqref{gf:eps} with initial data $(\varphi_0^\eps,\sigma_0^\eps)$, as provided by Theorem \ref{exist:eps}. Let $(\varphi_0,\sigma_0)$ be functions in $H^{2}(\Om)\cap H^{1}_n(\Om)\times H^{2s}(\Om)\cap H^{s}_n(\Om)$ such that $E^0(\varphi_0,\sigma_0)<+\infty$. In particular, such condition requires that $\varphi_0-\sigma_0=u_0=\chi_{\Om_0^+}-\chi_{\Om_0^-} $, where $\Om_0^+$ and $\Om_0^-$ are two disjoint sets with $\Om_0^-=\Om\setminus\Om_0^+$ and common boundary $\Gamma_0$. We assume that the interface $\Gamma_0$ is of class $C^3$, according with hypothesis (HP2).

We will assume that the initial conditions are well prepared, that is
\begin{align}\label{wellprepared}
&(\varphi_0^\eps,\sigma_0^\eps)\rightarrow (\varphi_0,\sigma_0)\;\;\;\text{ strongly in }L^1(\Om)\times L^1(\Om),\nonumber\\
 &\text{and }\;\;E^\eps(\varphi_0^\eps,\sigma_0^\eps)\rightarrow E^0(\varphi_0,\sigma_0).
\end{align}
In other words, we are assuming that $(\varphi_0^\eps,\sigma_0^\eps)$ is a recovery sequence for $(\varphi_0,\sigma_0)$.
 
Thanks to \cite[Theorem 1.2B]{Le} it is possible to construct a sequence of well-prepared initial data. We address to the following theorem  the easy adaptation to our case.

\begin{theorem}\label{well_p}
  Let $(\varphi_0,\sigma_0)\in L^1(\Om)\times H^{2s}(\Om)$ such that $\varphi_0-\sigma_0=u_0=1-2\chi_{\Om_0^-}=2\chi_{\Om_0^+}-1$, with $\Om^-_{\betti 0}\subset\Om$ having boundary $\Gamma_0=\partial\Om_0^-$ a $C^3$-closed surface in $\Om$ with finite $\mathcal H^2$ measure. 
  Then there exists a sequence of smooth functions $(\varphi^\eps_0,\sigma^\eps_0)$ such that \eqref{wellprepared} holds true.
 \end{theorem}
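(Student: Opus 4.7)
The plan is to exploit the decoupling $E^\eps(\varphi,\sigma)=M^\eps(\varphi-\sigma)+F(\varphi,\sigma)$: the functional $F$ is continuous with respect to strong $L^2\times H_n^s$ convergence, while $M^\eps$ is the classical Modica--Mortola functional applied to the single function $u:=\varphi-\sigma$. The natural idea is therefore to freeze the nutrient component $\sigma$ and to build a Modica--Mortola recovery sequence for $u_0=\varphi_0-\sigma_0=2\chi_{\Omega_0^+}-1$, which is legitimate precisely because $\Gamma_0=\partial\Omega_0^-$ is a $C^3$ closed surface strictly contained in $\Omega$.

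Concretely, I would set $\sigma_0^\eps:=\sigma_0$ and define $u_0^\eps$ through the classical optimal-profile construction. Let $d_0$ be the signed distance from $\Gamma_0$ (positive in $\Omega_0^+$), cut off outside a tubular neighbourhood of $\Gamma_0$ of width much smaller than the distance from $\Gamma_0$ to $\partial\Omega$, and set $u_0^\eps(x):=\phi(d_0(x)/\eps)$, where $\phi$ is the one-dimensional optimal profile defined by $\phi'=\sqrt{2W(\phi)}$, $\phi(\pm\infty)=\pm 1$; a final mollification makes $u_0^\eps$ smooth, and we keep $|u_0^\eps|\le 1$. Because $\Gamma_0\subset\subset\Omega$, for every $\eps$ small enough $u_0^\eps=\pm 1$ in a neighbourhood of $\partial\Omega$, so the homogeneous Neumann boundary condition is automatic and $u_0^\eps\in H_n^1(\Omega)$. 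Finally put $\varphi_0^\eps:=u_0^\eps+\sigma_0$, so that $(\varphi_0^\eps,\sigma_0^\eps)\in H_n^1(\Omega)\times H_n^s(\Omega)$ is an admissible initial datum for Theorem \ref{exist:eps}.

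The two required convergences are then direct. Trivially $\sigma_0^\eps\to\sigma_0$ and $\varphi_0^\eps-\varphi_0=u_0^\eps-u_0\to 0$ in $L^1(\Omega)$; in fact the convergence holds in every $L^p$ with $p<\infty$, by dominated convergence from $|u_0^\eps|\le 1$. The classical Modica--Mortola computation on the profile $\phi$ gives $M^\eps(u_0^\eps)\to c_W\mathcal H^2(\Gamma_0)=M^0(u_0)$. For the remaining term,
\begin{equation*}
F(\varphi_0^\eps,\sigma_0^\eps)-F(\varphi_0,\sigma_0)=\int_\Omega\sigma_0\,(u_0^\eps-u_0)\,dx,
\end{equation*}
which tends to zero because $\sigma_0\in H^{2s}(\Omega)\subset L^2(\Omega)$ and $u_0^\eps\to u_0$ in $L^2(\Omega)$. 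Summing the two contributions yields $E^\eps(\varphi_0^\eps,\sigma_0^\eps)\to E^0(\varphi_0,\sigma_0)$.

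The expected obstacle is essentially nonexistent: the only point that would require genuine work is guaranteeing the Neumann condition for $u_0^\eps$, but the hypothesis $\Gamma_0\subset\subset\Omega$ confines the transition layer to the interior of $\Omega$ and makes $u_0^\eps$ constant near $\partial\Omega$ for all $\eps$ small, so the condition holds for free. The more delicate construction of [Le, Theorem 1.2B] would only be needed if $\Gamma_0$ were allowed to meet $\partial\Omega$; our hypotheses rule this out, which is exactly why the authors describe the adaptation as "easy".
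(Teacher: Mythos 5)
Your proposal is correct and takes essentially the same route as the paper: decouple $E^\eps=M^\eps+F$, take a Modica--Mortola recovery sequence for $u_0$ (the paper simply cites Theorem 1.2 of \cite{Le} where you build the optimal profile by hand), use the fact that $\Gamma_0$ is a closed surface with positive distance from $\partial\Om$ to make $u_0^\eps$ constant near the boundary, and conclude via continuity of $F$ under the strong $L^2$ convergence of $u_0^\eps$. The only discrepancy is your choice $\sigma_0^\eps:=\sigma_0$: the statement asks for \emph{smooth} $(\varphi_0^\eps,\sigma_0^\eps)$, and $\sigma_0$ is a priori only in $H^{2s}(\Om)$ (it must in particular lie in $H^s_n(\Om)$ to be admissible for Theorem~\ref{exist:eps} and to make $E^0(\varphi_0,\sigma_0)$ finite), so the paper instead picks a sequence $\sigma_0^\eps\in H^s_n(\Om)$ converging strongly to $\sigma_0$ in $H^{2s}_n(\Om)$ --- a trivial fix, but one you should include.
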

 \begin{proof}
  Theorem 1.2 of \cite{Le} ensures the existence of a sequence $u^\eps_0$ of smooth functions such that $u^\eps_0\rightarrow u_0$ strongly in $L^1(\Om)$. Since the surface $\Gamma_0$ is closed in $\Om$, it has strictly positive distance from the boundary.  Assume the inner part of $\Gamma_0$ being $\Om_0^+$. Using a suitable cut-off function $\zeta$ which equals $1$ on $\bar\Om_0^+$ and $0$ on a neighborhood of $\partial \Om$, and then replacing $u_0^\eps$ by $\zeta(u^\eps_0+1)-1$, it is not restrictive to assume that $u^\eps_0\in H^1_n(\Om)\cap H^s_n(\Om)$. In order to have \eqref{wellprepared} it suffices to choose a sequence $\sigma^\eps_0\in H^s_n(\Om)$ converging strongly to $\sigma_0$ in $H^{2s}_n(\Om)$.  Then setting $\varphi^\eps_0:=\sigma^\eps_0+u^\eps_0$ the thesis easily follows thanks to the form of the energy $E_0$.
 \end{proof}

 \subsection{Convergence of gradient flows}
%

Let us start with the following statement:
\begin{lemma}\label{energy:velocity}
 Let $\cup_{t\in[0,T^*]}\Gamma(t)\times\{t\}\subset \Om\times[0,T^*]$ be a $C^3$ hypersurface with $\Gamma(t)$ closed for all $t\in[0,T^*]$. Let $u(t):=\chi_{\Om^+(t)}-\chi_{\Om^{\betti -}(t)}$ for all $t\in[0,T^*]$, and assume $u\in L^\infty(0,T^*;H)\cap H^1(0,T^*;H_n^{-s}(\Om))$ and  $\sigma\in L^\infty  (0,T^*;H_n^{s}(\Om))\cap L^2 (0,T^*;H_n^{2s}(\Om))\cap H^1(0,T^*;H)$. Then, for a.e. $t\in[0,T^*]$, 
 \begin{align*}
  \frac{d}{dt}E^0(u(t)+\sigma(t),\sigma(t))=&-2c_W(V(t),k(t))_{L^2(\Gamma)}+2(V(t),\sigma(t))_{L^2(\Gamma)}\\
  \nonumber
  &+(\dot\sigma(t),A^s \sigma(t)+u(t)+3\sigma(t)),
 \end{align*}
where $V(t)=\dot\Gamma(t)$ is the normal velocity of the interface  $\Gamma(t)$, and $k(t)$ is its mean curvature.
\end{lemma}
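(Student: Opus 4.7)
The plan is to split $E^0(u+\sigma,\sigma)$ into its geometric and analytic pieces and differentiate each separately. Substituting $\varphi=u+\sigma$ into the definition of $F$ and using $u(t)\in BV(\Omega,\{-1,1\})$ to reduce $M^0(u)$ to $c_W\mathcal H^{n-1}(\Gamma(t))$, I rewrite
\begin{align*}
E^0(u(t)+\sigma(t),\sigma(t))
= c_W\mathcal H^{n-1}(\Gamma(t)) + \int_\Omega u\,\sigma\,dx + \tfrac{3}{2}\int_\Omega \sigma^2\,dx + \tfrac{1}{2}a_s(\sigma,\sigma).
\end{align*}
It then suffices to differentiate these four independent summands in time and add them.

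For the surface term, under hypothesis (HP2) the set $\bigcup_{t\in[0,T^*]}\Gamma(t)\times\{t\}$ is a $C^3$ evolving hypersurface in $\Omega\subset\R^3$ with closed slices, so the classical first variation of area on a smoothly moving surface applies. Using the paper's convention for the normal velocity $V=\dot\Gamma$ and mean curvature $k$ (in $\R^3$ this brings in the factor $n-1=2$ from the two principal curvatures, and a sign from the orientation of the inward normal to $\Omega^+$), one obtains
\begin{align*}
\tfrac{d}{dt}\bigl[c_W\mathcal H^{n-1}(\Gamma(t))\bigr] = -2c_W\,(V(t),k(t))_{L^2(\Gamma(t))}.
\end{align*}

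For the cross term $\int_\Omega u\sigma\,dx$, the regularity $u\in H^1(0,T^*;H_n^{-s})$, $\sigma\in L^\infty(H_n^s)\cap H^1(H)$ together with $s\geq 1$ makes the duality pairing $\langle\dot u(t),\sigma(t)\rangle$ well defined for a.e.\ $t$, and the product rule yields $\tfrac{d}{dt}\int_\Omega u\sigma\,dx=\langle\dot u,\sigma\rangle+\int_\Omega u\,\dot\sigma\,dx$. Writing $u=2\chi_{\Omega^+}-1$ and invoking the Hadamard–Reynolds transport identity for the smoothly evolving domain $\Omega^+(t)$ — which is legitimate because $\Gamma(t)$ is a $C^3$ moving surface in the space-time cylinder — gives $\dot u(t)=2\,V(t)\,\mathcal H^{n-1}\llcorner\Gamma(t)$ as a distribution, so that $\langle\dot u,\sigma\rangle=2(V,\sigma)_{L^2(\Gamma)}$. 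For the two remaining $\sigma$-only terms, the stronger regularity $\sigma\in L^2(H_n^{2s})\cap H^1(H)$ and the self-adjointness of $A^s$ (Lemma and definitions of Section~\ref{setting}) produce, in a standard way,
\begin{align*}
\tfrac{d}{dt}\tfrac{3}{2}\|\sigma\|^2 = 3(\dot\sigma,\sigma), \qquad \tfrac{d}{dt}\tfrac{1}{2}a_s(\sigma,\sigma) = a_s(\sigma,\dot\sigma) = (A^s\sigma,\dot\sigma).
\end{align*}
Summing the four contributions recovers the right-hand side of the claim, since $\int_\Omega u\,\dot\sigma\,dx + 3(\dot\sigma,\sigma) + (A^s\sigma,\dot\sigma) = (\dot\sigma,A^s\sigma+u+3\sigma)$.

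The main obstacle is the rigorous justification of the two geometric identities in Steps 2 and 3, namely the time differentiation of $\mathcal H^{n-1}(\Gamma(t))$ and the identification of $\dot u$ as $2V$ times the surface measure on $\Gamma(t)$. Both rest on (HP2) (the $C^3$ space-time regularity of the interface) and on matching the authors' sign/orientation conventions for $V$ and $k$; once those are pinned down, the computation reduces to the classical first-variation and Reynolds transport formulas, and the remaining terms (Step 4) follow by routine chain-rule arguments using the functional-analytic regularity of $\sigma$.
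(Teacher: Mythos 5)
Your proposal is correct and follows essentially the same route as the paper: the authors likewise split $E^0$ into the perimeter term, handled by the classical first variation of area (citing \cite[Theorem 7.31]{AFP}), and the term $F(u+\sigma,\sigma)$, differentiated by the product rule with the identification $\langle\dot u,\sigma\rangle=2(\dot\Gamma,\sigma)_{L^2(\Gamma)}$. Your version merely spells out the convention-checking for $V$ and $k$ and the Reynolds transport identification of $\dot u$, which the paper leaves implicit.
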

\begin{proof}
 Using \cite[Theorem 7.31]{AFP} (see also \cite[formula (2.4)]{Le}) we obtain
 $$\frac{d}{dt}(c_W\HI^1(\Gamma))(t)=-2c_W( \dot\Gamma(t),k(t))_{L^2(\Gamma)}.$$
The time derivative of $\langle u+\sigma,\sigma\rangle+\frac{1}{2}\|A^{s/2}\sigma\|^2+\frac{1}{2}\|\sigma\|^2$ instead reads
\begin{align*}
 &\langle \dot u(t),\sigma\rangle+\langle\dot\sigma(t), u(t)\rangle+3(\dot\sigma(t),\sigma(t))+( \dot\sigma(t),A^{s}\sigma(t))=\\
 &=2( \dot\Gamma(t),\sigma)_{L^2(\Gamma)}+(\dot\sigma(t),A^{s}\sigma(t)+ u(t)+3\sigma(t)).
\end{align*}
\end{proof}

It is convenient to denote the normal velocity of the interface by $\dot \Gamma$, so that we write $\dot u=2\dot \Gamma$.

Let us recall that the functions $(\varphi^\eps,\sigma^\eps)$ satisfy
\begin{align}
 \begin{cases}
  &\dot\varphi^\eps=-A^sv^\eps-A^s\sigma^\eps,\label{gf:eps_bis}\\
  &\dot\sigma^\eps=v^\eps-A^s\sigma^\eps-\sigma^\eps-\varphi^\eps,
 \end{cases}
\end{align}
with the corresponding  energy balance
\begin{align}\label{energy}
 E^\eps(\varphi_0^\eps,\sigma_0^\eps)-E^\eps(\varphi^\eps(t),\sigma^\eps(t))=\int_0^t\|\dot\varphi^\eps\|_{H^{-s}_n(\Om)}^2+\|\dot\sigma^\eps\|^2ds,
\end{align}
valid for all $t\in[0,T]$. From this we easily infer some a-priori estimates.

\begin{prop}
For $\eps\in(0,1)$ let $(\varphi^\eps,\sigma^\eps)$ be a solution in Theorem \ref{exist:eps} with initial datum $(\varphi_0^\eps,\sigma_0^\eps)$. Then there exists a constant $M>0$ such that
\begin{align}
 &\|\varphi^\eps\|_{L^\infty(0,T;H)}\leq M,\label{apriori1}\\
 &\|\sigma^\eps\|_{L^\infty(0,T;H^s_n(\Om))}\leq M,\label{apriori2}\\
 &\|\varphi^\eps\|_{H^1(0,T;H^{-s}_n(\Om))}\leq M,\label{apriori3}\\
 &\|\sigma^\eps\|_{H^1(0,T;H)}\leq M,\label{apriori4}
\end{align}
and, setting $v^\eps:= \frac{1}{\eps}W'(\varphi^\eps-\sigma^\eps)-\eps\Delta(\varphi^\eps-\sigma^\eps)$ and $u^\eps:=\varphi^\eps-\sigma^\eps$, we have
\begin{align}
 &\|v^\eps\|_{L^2(0,T;H_n^s(\Om))}\leq M,\label{apriori5}\\
 &\|\sigma^\eps\|_{L^2(0,T;H_n^{2s}(\Om))}\leq M,\label{apriori6}\\
 &\|u^\eps\|_{L^\infty(0,T;L^4(\Om))}\leq M\label{apriori7}.
\end{align}
for all $\eps\in (0,1)$.
\end{prop}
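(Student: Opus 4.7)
All seven estimates come from the energy balance \eqref{energy}, the $\eps$-independent coerciveness \eqref{unibound}, and an algebraic use of the evolution equations \eqref{gf:eps_bis}. First I would use well-preparedness \eqref{wellprepared} to set $M_0 := \sup_\eps E^\eps(\varphi_0^\eps,\sigma_0^\eps) < +\infty$. Since the right-hand side of \eqref{energy} is nonnegative, this yields $E^\eps(\varphi^\eps(t),\sigma^\eps(t)) \leq M_0$ uniformly in $t$ and $\eps$, together with $\int_0^T(\|\dot\varphi^\eps\|^2_{H^{-s}_n}+\|\dot\sigma^\eps\|^2)\,ds \leq M_0$. The latter is exactly \eqref{apriori3}--\eqref{apriori4}; the former, inserted into \eqref{unibound}, gives \eqref{apriori1}--\eqref{apriori2}.

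For \eqref{apriori7}, I would only need the inclusion $\int_\Om W(u^\eps)\,dx \leq \eps M_0$ contained in the energy bound. Expanding $W(u)=u^4-2u^2+1$ and rearranging gives $\|u^\eps\|^4_{L^4} \leq 2\|u^\eps\|^2_H + \eps M_0$, and the right-hand side is uniformly bounded because $u^\eps=\varphi^\eps-\sigma^\eps$ is in $L^\infty(0,T;H)$ by \eqref{apriori1}--\eqref{apriori2}.

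The bounds \eqref{apriori5} and \eqref{apriori6} are the delicate ones, because the singular combination $v^\eps = W'(u^\eps)/\eps - \eps\Delta u^\eps$ cannot be estimated term-by-term, since each piece blows up in $\eps$. Instead I would rewrite the first equation of \eqref{gf:eps_bis} as $A^s v^\eps = -\dot\varphi^\eps - A^s\sigma^\eps$, whence $\|A^{s/2} v^\eps\|_H \leq \|\dot\varphi^\eps\|_{H^{-s}_n} + \|A^{s/2}\sigma^\eps\|_H$, uniformly bounded in $L^2(0,T)$ by \eqref{apriori2}--\eqref{apriori3}. To upgrade this seminorm bound to a full $H^s_n$-bound I would control the mean of $v^\eps$ by testing the second equation of \eqref{gf:eps_bis} against the constant function $1$: since $\int_\Om A^s\sigma^\eps = 0$ under the Neumann conditions built into $A^s$, one obtains $\int_\Om v^\eps = \int_\Om(\dot\sigma^\eps+\sigma^\eps+\varphi^\eps)$, uniformly bounded in $t$. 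The generalized Poincar\'e--Wirtinger inequality then yields \eqref{apriori5}. Finally \eqref{apriori6} is immediate from the identity $A^s\sigma^\eps = v^\eps - \dot\sigma^\eps - \sigma^\eps - \varphi^\eps$ read off from the second equation of \eqref{gf:eps_bis}, now that all four summands are uniformly bounded in $L^2(0,T;H)$.

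The main obstacle is exactly this $H^s$-estimate on $v^\eps$: one must not split the singular quantity $W'(u^\eps)/\eps - \eps\Delta u^\eps$ into its two divergent constituents, but instead use that the gradient-flow structure forces this combination to equal $A^{-s}(-\dot\varphi^\eps - A^s\sigma^\eps)$, transferring the singular behaviour into quantities whose norms have already been controlled at the energy level.
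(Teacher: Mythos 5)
Your proof is correct and follows the paper's strategy in all essentials: energy balance plus well-preparedness for \eqref{apriori1}--\eqref{apriori4}, the quartic coercivity of $W$ for \eqref{apriori7}, and --- crucially --- reading $v^\eps$ off the evolution equations rather than estimating $W'(u^\eps)/\eps$ and $\eps\Delta u^\eps$ separately, which is exactly the paper's key move. The only genuine difference is in how the $H^s_n$-bound \eqref{apriori5} is closed: the paper subtracts the two equations of \eqref{gf:eps_bis} to obtain $A^s v^\eps+v^\eps=\varphi^\eps+\sigma^\eps-\dot\varphi^\eps+\dot\sigma^\eps$ and then invokes the coercivity of the bilinear form $a_s(\cdot,\cdot)+(\cdot,\cdot)_H$ on $H^s_n(\Om)$, so that the zeroth-order term automatically takes care of the constant mode; you instead use only the first equation to bound the seminorm $\|A^{s/2}v^\eps\|$ and recover the mean of $v^\eps$ from the second equation plus Poincar\'e--Wirtinger. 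Both routes are valid and of comparable length; the paper's version is marginally cleaner because it sidesteps the mean-value bookkeeping (note, incidentally, that your claim that $\int_\Om v^\eps$ is ``uniformly bounded in $t$'' is a slight overstatement, since $\dot\sigma^\eps$ is only controlled in $L^2$ in time --- but $L^2(0,T)$ control of the mean is all you need for \eqref{apriori5}, so nothing breaks). The derivation of \eqref{apriori6} is identical in both arguments.
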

\begin{proof}

For all $t\in[0,T]$ we have
\begin{align}
 \int_0^t\|\dot\sigma^\eps\|^2+\|\dot\varphi^\eps\|_{H^{-s}_n(\Om)}^2ds+E(\varphi^\eps(t),\sigma^\eps(t))=E(\varphi_0^\eps,\sigma_0^\eps)\leq M,
\end{align}
which, together with \eqref{unibound}, implies \eqref{apriori1}-\eqref{apriori4}.
The uniform boundedness \eqref{unibound} and the coerciveness property $E^\eps(\varphi,\sigma)\geq C\|\varphi-\sigma\|_{L^4}^4(\Om)$ imply
\begin{align}
 \|u^\eps\|_{L^\infty(0,T;L^4(\Om))}\leq M.
\end{align}
\betti
Subtracting the two equations in \eqref{gf:eps_bis}, we obtain
$$A^sv^\eps+v^\eps=\varphi^\eps+\sigma^\eps-\dot\varphi^\eps+\dot\sigma^\eps,$$
so that, by \eqref{apriori3} and \eqref{apriori4}, we see that $$\|A^sv^\eps+v^\eps\|_{L^2(0,T;H^{-s}_n(\Om))}\leq M,$$ for all $\eps\in(0,1)$. Therefore we infer \eqref{apriori5} from the fact that the bilinear form $a_s(\cdot,\cdot)+(\cdot,\cdot)_H$ is coercive on $H^s_n(\Om)$.
From this, \eqref{apriori1}, \eqref{apriori2}, and \eqref{apriori4}, thanks to the fact that $\dot \sigma^\eps=-A^s\sigma^\eps+v^\eps-\varphi^\eps-\sigma^\eps $, we infer
\begin{align}
 \|A^s \sigma^\eps\|_{L^2(0,T;H)}\leq M,
\end{align}
 implying \eqref{apriori6}.
\end{proof}
\betti
\begin{remark}
 Notice that estimate \eqref{apriori1} can be refined. Actually from \eqref{apriori2}, \eqref{apriori7}, and the embedding $ H^s_n(\Om)\subset L^4(\Om)$ we obtain
 \begin{align}
 \|\varphi^\eps\|_{L^\infty(0,T;L^4(\Om))}\leq M.  
 \end{align}
\end{remark}
\bl

\begin{prop}\label{u:conv}
 For a subsequence, we have
 \begin{align}
 u^\eps\rightharpoonup u \;\;\;\;\text{ weakly in }L^4(\Om\times[0,T]).
 \end{align}
 Moreover, for all $t\in[0,T]$, $u(t)\in BV(\Om;\{-1,1\})$ and 
\begin{align}
& u^\eps(t)\rightharpoonup u(t)\;\;\;\;\text{ weakly in }L^4(\Om),\\
& u^\eps(t)\rightarrow u(t)\;\;\;\;\text{ strongly in }L^1(\Om),\\
& u^\eps(t)\rightharpoonup u(t)\;\;\;\;\text{ weakly* in }BV(\Om). 
\end{align}
\end{prop}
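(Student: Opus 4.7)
The plan is to first extract a weak $L^4$ subsequence in space-time, then to upgrade to a compactness statement valid for every $t$ via an Ascoli--Arzel\`a argument in a negative Sobolev space, and finally to strengthen the weak $L^4$ pointwise convergence to $L^1$-strong convergence at each $t$ via the pointwise-in-time Modica--Mortola bound.

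By \eqref{apriori7}, $u^\eps=\varphi^\eps-\sigma^\eps$ is uniformly bounded in $L^\infty(0,T;L^4(\Omega))\subset L^4(\Omega\times[0,T])$, so a subsequence converges weakly in $L^4(\Omega\times[0,T])$ to some $u$. The energy identity \eqref{energy} combined with \eqref{apriori1}--\eqref{apriori2} yields that $F(\varphi^\eps(t),\sigma^\eps(t))$ is uniformly bounded, whence a \emph{pointwise} bound $M^\eps(u^\eps(t))\leq C$ for every $t\in[0,T]$. Moreover \eqref{apriori3}--\eqref{apriori4} and the continuous embedding $H\hookrightarrow H_n^{-s}(\Omega)$ give $\dot u^\eps$ bounded in $L^2(0,T;H_n^{-s}(\Omega))$, so $t\mapsto u^\eps(t)$ is uniformly $\tfrac12$-H\"older continuous into $H_n^{-s}(\Omega)$. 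Since Rellich provides $H_n^s(\Omega)\hookrightarrow\hookrightarrow H$, duality gives $H\hookrightarrow\hookrightarrow H_n^{-s}(\Omega)$, hence $L^4(\Omega)\hookrightarrow\hookrightarrow H_n^{-s}(\Omega)$; in particular $\{u^\eps(t)\}$ is precompact in $H_n^{-s}(\Omega)$ for every fixed $t$. Ascoli--Arzel\`a then yields a (not relabeled) subsequence with $u^\eps\to u$ in $C([0,T];H_n^{-s}(\Omega))$; combined with the uniform $L^4$-bound and uniqueness of weak limits, this gives $u^\eps(t)\rightharpoonup u(t)$ weakly in $L^4(\Omega)$ for every $t\in[0,T]$.

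To pass from weak $L^4$ to strong $L^1$ at each $t$, introduce $\Phi(x):=\int_0^x\sqrt{2W(r)}\,dr$. Young's inequality gives
\begin{align*}
|\nabla \Phi(u^\eps(t))|=\sqrt{2W(u^\eps(t))}\,|\nabla u^\eps(t)|\;\leq\;\frac{\eps}{2}|\nabla u^\eps(t)|^2+\frac{1}{\eps}W(u^\eps(t)),
\end{align*}
so $\|\Phi(u^\eps(t))\|_{BV(\Omega)}\leq M^\eps(u^\eps(t))+\|\Phi(u^\eps(t))\|_{L^1(\Omega)}\leq C$ by the pointwise Modica--Mortola bound and the $L^4$-bound (note that $\Phi$ has cubic growth). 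The compact embedding $BV\hookrightarrow\hookrightarrow L^1$, together with the fact that $\Phi:\R\to\R$ is a strictly increasing homeomorphism, imply that $\{u^\eps(t)\}$ is precompact in $L^1(\Omega)$; its only possible limit is $u(t)$ by the previous step, so $u^\eps(t)\to u(t)$ strongly in $L^1(\Omega)$. Finally, Theorem~\ref{Modica} (liminf inequality) yields $M^0(u(t))\leq \liminf_\eps M^\eps(u^\eps(t))\leq C$, hence $u(t)\in BV(\Omega;\{-1,1\})$ with uniformly bounded perimeter for every $t$. The stated weak-$\ast$ $BV$ convergence then follows from the combination of strong $L^1$-convergence with the uniform bound on $|Du(t)|(\Omega)$ inherited from the Modica--Mortola liminf.

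The main subtlety is upgrading convergence \emph{for a.e.}~$t$ to convergence \emph{for every} $t$: this rests on the \emph{pointwise}-in-time nature of the Modica--Mortola bound (which yields $L^1$-precompactness at each individual $t$) together with the uniform $\tfrac12$-H\"older continuity in $H_n^{-s}(\Omega)$ (which makes the Ascoli extraction valid at every $t$ rather than almost everywhere).
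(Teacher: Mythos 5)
Your argument is essentially a written-out version of what the paper delegates to \cite[Proposition 4.1]{Le}: the pointwise-in-time bound $M^\eps(u^\eps(t))\le C$ from the energy inequality, the uniform $\tfrac12$-H\"older continuity of $t\mapsto u^\eps(t)$ in $H^{-s}_n(\Om)$ plus Ascoli, and the classical trick of passing through $\Phi(r)=\int_0^r\sqrt{2W}$ to get $L^1(\Om)$-precompactness at every fixed $t$. The first three displayed convergences and the claim $u(t)\in BV(\Om;\{-1,1\})$ are correctly established this way (modulo the mean-value conventions in the definition of $H^{-s}_n(\Om)$, which the paper itself glosses over).

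The one genuine gap is the last convergence, $u^\eps(t)\rightharpoonup u(t)$ weakly* in $BV(\Om)$. Weak* convergence in $BV$ requires, besides $L^1$-convergence, that $Du^\eps(t)\rightharpoonup Du(t)$ as measures, and hence a uniform bound on $\sup_\eps|Du^\eps(t)|(\Om)$; a bound on $|Du(t)|(\Om)$ (the limit), which is what your closing sentence invokes, is not the right object. The Modica--Mortola bound controls $\|\nabla\Phi(u^\eps(t))\|_{L^1}$, not $\|\nabla u^\eps(t)\|_{L^1}$: from $\int_\Om\eps|\nabla u^\eps(t)|^2\,dx\le C$ one only gets $\int_\Om|\nabla u^\eps(t)|\,dx\lesssim\eps^{-1/2}$, and this cannot be improved in general (small-amplitude, high-frequency oscillations around the wells $\pm1$, e.g.\ of amplitude $\eps^{1/2}$ and wavelength $\eps$, keep $M^\eps$ bounded while $\int|\nabla u^\eps|\to\infty$). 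So either the weak* $BV$ statement should be made for $\Phi(u^\eps(t))$ --- for which your $W^{1,1}$ bound does the job --- or it has to be read in the weaker sense of ``$L^1$-convergence to a limit belonging to $BV$''. To be fair, the paper's own proof consists of a citation and does not address this point either, but as written your justification of this particular claim does not close.
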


\begin{proof}
The first statement readily follows from \eqref{apriori7}. To conclude the proof it suffices to follow the lines of the proof of  \cite[Proposition 4.1]{Le}, which can be trivially adapted to our case.
\end{proof}

\begin{prop}\label{prop:v=k}
Up to a subsequence, the functions $v^\eps\rightharpoonup v$ weakly in $L^2(0,T;V)$ and the limit function $v$ satisfies for a.e. $t\in[0,T]$
 \begin{equation}\label{v=mk}
  v(t)=-c_Wk(t)\;\;\;\text{ on }\Gamma(t),
 \end{equation}
where $k(t)\in H^{-1/2}(\Gamma(t))$ is the mean curvature of the smooth surface $\Gamma(t)$ at time $t$. 
\end{prop}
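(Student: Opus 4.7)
The weak convergence $v^\eps\rightharpoonup v$ in $L^2(0,T;V)$ is immediate: the bound \eqref{apriori5} in $L^2(0,T;H^s_n(\Om))$ together with the continuous embedding $H^s_n(\Om)\hookrightarrow V$ (valid since $s\geq 1$) allows the extraction of a subsequence converging weakly in $L^2(0,T;H^s_n(\Om))$, hence in $L^2(0,T;V)$. The substantial part is the pointwise-in-time identification $v(t)=-c_W k(t)$ on $\Gamma(t)$, and I would attack it through the stress-tensor (Modica--Mortola--Luckhaus--Modica) argument already employed in the standard Cahn--Hilliard setting by Le \cite{Le}.

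Using $v^\eps=W'(u^\eps)/\eps-\eps\Delta u^\eps$ and the elementary identity $(\Delta u^\eps)\nabla u^\eps={\rm div}(\nabla u^\eps\otimes\nabla u^\eps)-\nabla(|\nabla u^\eps|^2/2)$, I would first derive
\begin{align*}
v^\eps\nabla u^\eps={\rm div}\,T^\eps,\qquad T^\eps:=e^\eps\,I-\eps\,\nabla u^\eps\otimes\nabla u^\eps,\ \ e^\eps:=\tfrac{\eps}{2}|\nabla u^\eps|^2+\tfrac{W(u^\eps)}{\eps}.
\end{align*}
Testing against an arbitrary $\eta\in C_c^\infty(\Om;\R^3)$ and integrating by parts,
\begin{align*}
\int_\Om v^\eps(t)\nabla u^\eps(t)\cdot\eta\,dx=-\int_\Om T^\eps(t):\nabla\eta\,dx,
\end{align*}
and then I would pass to the limit separately on both sides for a.e. $t\in[0,T^*]$.

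For the right-hand side, (HP1) combined with \cite[Theorem 3.2]{RoTon}, or alternatively (HP1bis), guarantees that for a.e. $t$ one has $e^\eps(t)\rightharpoonup 2c_W\,\H^{d-1}\llcorner\Gamma(t)$ in the sense of Radon measures, and by equipartition of energy \cite{LuMo} this limit is split equally between the two terms composing $e^\eps$. Since $\nabla u^\eps/|\nabla u^\eps|$ asymptotically aligns with the unit normal $\nu(t)$ to $\Gamma(t)$, one obtains $\eps\,\nabla u^\eps\otimes\nabla u^\eps\rightharpoonup 2c_W\,\nu\otimes\nu\,\H^{d-1}\llcorner\Gamma(t)$, whence
\begin{align*}
T^\eps(t)\rightharpoonup 2c_W(I-\nu\otimes\nu)\,\H^{d-1}\llcorner\Gamma(t).
\end{align*}
Using $(I-\nu\otimes\nu):\nabla\eta=\mathrm{div}_\Gamma\eta$ together with the divergence theorem on the closed smooth surface $\Gamma(t)$, the right-hand side limit becomes (up to the orientation convention fixed in the introduction) $2c_W\int_{\Gamma(t)}k(t)\,\nu\cdot\eta\,d\H^{d-1}$.

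For the left-hand side I would exploit the Sobolev embedding $H^s(\Om)\hookrightarrow C^0(\bar\Om)$, available under (HP1) since $s>3/2$ and $\Om\subset\R^3$: coupled with Rellich compactness, it upgrades the weak convergence $v^\eps(t)\rightharpoonup v(t)$ in $H^s_n(\Om)$ to strong $C^0(\bar\Om)$ convergence along a subsequence for a.e. $t$; in the (HP1bis) regime the same is obtained in the weaker form of strong trace convergence in $H^{s-1/2-\alpha}(\Gamma(t))$ for a small $\alpha>0$, which still suffices thanks to the smoothness of $\Gamma(t)$. Combined with the measure convergence $\nabla u^\eps(t)\rightharpoonup Du(t)=-2\nu\,\H^{d-1}\llcorner\Gamma(t)$ provided by Proposition \ref{u:conv}, this yields
\begin{align*}
\int_\Om v^\eps(t)\nabla u^\eps(t)\cdot\eta\,dx\longrightarrow -2\int_{\Gamma(t)}v(t)\,\nu\cdot\eta\,d\H^{d-1}.
\end{align*}
Equating the two limits and exploiting the arbitrariness of $\eta$ then gives $v(t)=-c_W k(t)$ on $\Gamma(t)$, which is the claim. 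The only genuinely delicate step is the convergence of the stress tensor $T^\eps$: it relies not on the $\Gamma$-convergence of $M^\eps$ alone but on the finer multiplicity-one property of the limit varifold, which is precisely what (HP1)+\cite{RoTon} secure for $s>3/2$ and what (HP1bis) postulates for $s\in[1,3/2]$; everything else is routine Sobolev/measure-theoretic machinery.
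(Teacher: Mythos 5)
Your argument is correct and is essentially the same as the paper's: the paper's proof simply defers to \cite[Lemma 3.1]{Le}, and what you have written out — the stress-tensor identity $v^\eps\nabla u^\eps=\div T^\eps$, the varifold/multiplicity-one convergence of $T^\eps$ secured by (HP1) together with \cite{RoTon} (or postulated in (HP1bis)), and the a.e.-in-$t$ strong convergence of $v^\eps(t)$ via the $H^s_n(\Om)$ bound — is precisely the content of that lemma adapted to this setting. No discrepancy to report.
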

\begin{proof}
 To prove this we argue as in \cite[Lemma 3.1]{Le}. Note that this result is strongly based on hypothesis (HP1) or (HP1bis).
\end{proof}

\begin{prop}
 For all $t\in[0,T]$ there holds
 \begin{equation}\label{velocities:1}
  \liminf_{\eps\rightarrow0}\int_0^t\|\dot\varphi^\eps(s)\|_{H_n^{-s}(\Om)}ds\geq \int_0^t\|2\dot\Gamma(s)+\dot\sigma(s)\|_{H^{-s}_n(\Om)}ds.
 \end{equation}

\end{prop}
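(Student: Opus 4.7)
The plan is to combine three ingredients: the uniform estimate \eqref{apriori3}, the identification of the distributional time derivative of $u$ with twice the normal velocity of $\Gamma$, and the weak lower semicontinuity of the $L^1$--Bochner norm.

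\textbf{Step 1 (compactness).} Estimate \eqref{apriori3} ensures that $\{\dot\varphi^\eps\}$ is bounded in $L^2(0,T;H^{-s}_n(\Om))$, so (up to a subsequence) there is $\xi\in L^2(0,T;H^{-s}_n(\Om))$ with $\dot\varphi^\eps\rightharpoonup\xi$ weakly. From Proposition \ref{u:conv}, $u^\eps\to u$ strongly in $L^1(\Om\times(0,T))$ and, from \eqref{apriori4}, $\sigma^\eps\rightharpoonup \sigma$ weakly in $H^1(0,T;H)$. Hence $\varphi^\eps=u^\eps+\sigma^\eps\to \varphi:=u+\sigma$ in $\mathcal D'(\Om\times(0,T))$, and by uniqueness of the distributional limit $\xi=\dot\varphi$.

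\textbf{Step 2 (identification of $\dot\varphi$).} By (HP2) the space--time interface $\bigcup_t \Gamma(t)\times\{t\}$ is of class $C^3$ on $[0,T^*]$, so that for every $t\in[0,T^*]$ its normal velocity $V(t)=\dot\Gamma(t)$ is a continuous function on $\Gamma(t)$. The Reynolds transport formula (cf.~\cite[formula~(2.4)]{Le}) then yields, for any $\psi\in C^\infty_c(\Om\times(0,T^*))$,
\begin{align*}
\langle \dot u,\psi\rangle
=-\int_0^{T^*}\!\!\int_\Om u\,\dot\psi\,dx\,dt
= 2\int_0^{T^*}\!\!\int_{\Gamma(t)}V(t)\,\psi\,d\mathcal H^{2}\,dt,
\end{align*}
identifying $\dot u(t)$ with the surface distribution $2V(t)\mathcal H^{2}\llcorner\Gamma(t)$. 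By the relation \eqref{relation} the latter belongs to $V_\Gamma'\subset H^{-1}_n(\Om)\subset H^{-s}_n(\Om)$, since $s\geq 1$. Together with $\dot\sigma\in L^2(0,T;H)\hookrightarrow L^2(0,T;H^{-s}_n(\Om))$, we conclude that $\dot\varphi=\dot u+\dot\sigma=2\dot\Gamma+\dot\sigma$ in $L^2(0,T^*;H^{-s}_n(\Om))$.

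\textbf{Step 3 (lower semicontinuity).} The functional $f\mapsto \int_0^t \|f(s)\|_{H^{-s}_n(\Om)}\,ds$ is a seminorm on $L^1(0,t;H^{-s}_n(\Om))$, hence convex and strongly continuous, and therefore weakly lower semicontinuous. Since weak convergence in $L^2(0,T;H^{-s}_n(\Om))$ entails weak convergence in $L^1(0,t;H^{-s}_n(\Om))$ for each $t$, passing to the liminf in $\dot\varphi^\eps\rightharpoonup 2\dot\Gamma+\dot\sigma$ produces the inequality
\begin{align*}
\liminf_{\eps\to 0}\int_0^t\|\dot\varphi^\eps(s)\|_{H^{-s}_n(\Om)}\,ds\geq \int_0^t\|2\dot\Gamma(s)+\dot\sigma(s)\|_{H^{-s}_n(\Om)}\,ds.
\end{align*}

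The main technical point is Step 2: one must rigorously derive the transport identity under assumption (HP2) and interpret the resulting surface distribution as an element of $H^{-s}_n(\Om)$ through the framework developed at the end of Section \ref{setting}; the rest of the proof then reduces to standard weak compactness and lower semicontinuity arguments.
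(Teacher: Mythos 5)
Your proposal is correct and follows essentially the same route as the paper: identify the weak $L^2(0,T;H^{-s}_n(\Om))$ limit of $\dot\varphi^\eps$ with $2\dot\Gamma+\dot\sigma$ by combining the distributional convergence $\dot u^\eps\to\dot u=2\dot\Gamma$ (from $u^\eps\to u$ in $L^1$) with the weak convergence of $\dot\sigma^\eps$ from \eqref{apriori4}, and then conclude by weak lower semicontinuity of the convex functional $f\mapsto\int_0^t\|f(s)\|_{H^{-s}_n(\Om)}\,ds$. Your Step 2 merely makes explicit, via the transport identity, the representation of $\dot u$ as a surface distribution, whereas the paper obtains its membership in $L^2(0,T;H^{-s}_n(\Om))$ for free from the uniqueness of the distributional limit; both are fine.
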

\begin{proof}
 Since $u^\eps\rightarrow u$ in $L^1([0,T]\times\Om)$ we know that $\dot u^\eps$ tends to $\dot u=2\dot\Gamma$ in the sense of distributions. On the other hand, we know that $\dot u^\eps=(\dot\varphi^\eps-\dot \sigma^\eps)\rightharpoonup(\dot \varphi-\dot \sigma)$ weakly in $L^2(0,T;H^{-s}_n(\Om))$, so that $\dot \varphi-\dot \sigma=2\dot\Gamma\in L^2(0,T;H^{-s}_n(\Om))$. This, together with \eqref{apriori4}, implies 
 $$\dot\varphi^\eps\rightharpoonup 2\dot\Gamma+\dot\sigma\;\;\;\;\text{ weakly in }L^2(0,T;H^{-s}_n(\Om)).$$
The thesis then follows by lower semicontinuity.
 \end{proof}

  Now we are ready to state the main result of the paper. \bl

 \begin{theorem}\label{main}
 Let us assume hypotheses (HP1) or (HP1bis), and (HP2). Suppose that the initial data satisfy \eqref{wellprepared}. Then there exists a time $T^*\in (0,T]$ such that it holds
  \begin{align}\label{f1}
  &2\dot\Gamma(t)=-A^s v(t)+\varphi(t)+\sigma(t)-v(t),\\
  \label{f2}
  &\dot\sigma(t)=-A^s\sigma(t)+v(t)-\varphi(t)-\sigma(t),
  \end{align}
 and
\begin{align}
&v(t)=-c_Wk \label{part:v}\;\;\;\mathcal H^2-\text{a.e. on }\Gamma,
\end{align}
for a.e. $t\in [0,T^*]$. Moreover it holds
\begin{align}\label{part:Vbis}
 A^s v(t)=\varphi(t)+\sigma(t)-v(t),
\end{align}
almost everywhere in $\Om^+\cup\Om^-$, and for a.e. $t\in[0,T^*]$.
\betti
Finally 
\begin{align}
 &\sigma^\eps(t)\rightarrow\sigma(t)\;\;\;\;\text{ strongly in }H^s_n(\Om),
 \end{align}
 for all $t\in[0,T^*]$ and
 \begin{align}
 &v^\eps\rightarrow v\;\;\;\;\text{ strongly in }L^2(0,T^*;H^{s}_n(\Om)).
\end{align}
\end{theorem}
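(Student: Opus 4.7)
I would follow the De Giorgi/Sandier-Serfaty variational scheme for $\Gamma$-convergence of gradient flows, combining the weak compactness from the a priori bounds \eqref{apriori1}--\eqref{apriori7}, the $\Gamma$-liminf of Theorem \ref{Gamma:conv}, the identification of the limit chemical potential on $\Gamma$ provided by Proposition \ref{prop:v=k}, and the chain rule of Lemma \ref{energy:velocity}.

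\textbf{Step 1 (weak limits and passage in the PDEs).} On a subsequence I extract $\sigma^\eps \rightharpoonup \sigma$ weakly$^*$ in $L^\infty(0,T;H_n^s(\Om))$ and weakly in $H^1(0,T;H)\cap L^2(0,T;H_n^{2s}(\Om))$, $\varphi^\eps \rightharpoonup \varphi$ in $H^1(0,T;H_n^{-s}(\Om))$, and $v^\eps \rightharpoonup v$ in $L^2(0,T;H_n^s(\Om))$, while Proposition \ref{u:conv} identifies $u=\varphi-\sigma=\chi_{\Om^+}-\chi_{\Om^-}$. Passing to the limit in the second equation of \eqref{gf:eps_bis} directly gives \eqref{f2}. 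Subtracting the two equations in \eqref{gf:eps_bis} yields $\dot u^\eps = -A^s v^\eps - v^\eps + \varphi^\eps + \sigma^\eps$; since $\dot u^\eps \to 2\dot\Gamma$ in $\mathcal D'$, a comparison provides \eqref{f1}. Restricting \eqref{f1} to the pure phases, where $\dot\Gamma$ is trivially zero, yields \eqref{part:Vbis}, while \eqref{part:v} is Proposition \ref{prop:v=k}.

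\textbf{Step 2 (energy inequality matched by the chain rule).} From the energy identity \eqref{energy}, the well-preparedness \eqref{wellprepared} and the $\Gamma$-liminf of Theorem \ref{Gamma:conv} applied at time $t$, I deduce
\begin{align*}
E^0(\varphi(t),\sigma(t)) + \liminf_{\eps \to 0}\int_0^t \bigl(\|\dot\varphi^\eps\|_{H_n^{-s}}^2 + \|\dot\sigma^\eps\|^2\bigr)\,ds \leq E^0(\varphi_0,\sigma_0).
\end{align*}
By \eqref{velocities:1} and weak $L^2$ lower semicontinuity the liminf is $\ge \int_0^t(\|2\dot\Gamma+\dot\sigma\|_{H_n^{-s}}^2+\|\dot\sigma\|^2)\,ds$. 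On the other hand, Lemma \ref{energy:velocity} together with \eqref{f1}--\eqref{part:v} gives the reverse identity
\begin{align*}
E^0(\varphi_0,\sigma_0)-E^0(\varphi(t),\sigma(t)) = \int_0^t \bigl(\|2\dot\Gamma+\dot\sigma\|_{H_n^{-s}}^2 + \|\dot\sigma\|^2\bigr)\,ds,
\end{align*}
obtained by pairing \eqref{f1} in the $H_n^{-s}\times H_n^s$ duality (so that $-2c_W(V,k)+2(V,\sigma)$ is reproduced through $v|_\Gamma=-c_Wk$) and \eqref{f2} with $\dot\sigma$ in $H$. Comparison with the previous inequality forces equalities throughout.

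\textbf{Step 3 (strong convergences).} Equality at the level of energies gives $E^\eps(\varphi^\eps(t),\sigma^\eps(t)) \to E^0(\varphi(t),\sigma(t))$, which joined with the weak convergence of $\sigma^\eps(t)$ and the quadratic part of $E^\eps$ yields $\sigma^\eps(t)\to\sigma(t)$ strongly in $H_n^s(\Om)$ for all $t\in[0,T^*]$. Equality in the dissipation, joined with weak convergences of $\dot\sigma^\eps$ and $\dot\varphi^\eps$, yields their strong convergence in $L^2(0,T^*;H)$ and $L^2(0,T^*;H_n^{-s})$ respectively; plugging these into the identity $A^sv^\eps+v^\eps = \varphi^\eps+\sigma^\eps-\dot\varphi^\eps+\dot\sigma^\eps$ and using coercivity of $a_s+(\cdot,\cdot)_H$ on $H_n^s$ gives $v^\eps \to v$ strongly in $L^2(0,T^*;H_n^s(\Om))$.

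\textbf{Main obstacle.} The delicate part is Step 2: making sense of the surface distribution $2\dot\Gamma$ as an element of $H_n^{-s}(\Om)$, and verifying that the surface pairing $\langle 2\dot\Gamma,v+\sigma\rangle$ obtained via the chain rule reproduces exactly the dissipation coming from the energy inequality. This is precisely where (HP1) (or the alternative (HP1bis)) is needed, so that $v|_\Gamma=-c_Wk$ is meaningful, and where the $C^3$ regularity of $\Gamma$ from (HP2) is used to manipulate the trace and curvature integrals in Lemma \ref{energy:velocity}.
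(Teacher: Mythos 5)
Your proposal is correct and follows the same De Giorgi/Sandier--Serfaty scheme as the paper, with one genuine divergence in how the limit equations are obtained. The paper never passes to the limit in \eqref{gf:eps_bis} directly: it writes twice the energy drop as the sum of the velocity terms and of the slope terms, bounds it from below through \eqref{velocities:1}, weak lower semicontinuity, Cauchy--Schwarz, \eqref{v=mk} and Lemma \ref{energy:velocity}, matches this against the $\limsup$ coming from well-preparedness and Theorem \ref{Gamma:conv}, and then reads \eqref{f1}--\eqref{f2} off the \emph{equality case in Cauchy--Schwarz}. You instead obtain \eqref{f1}--\eqref{f2} by direct weak passage in the equations --- which is legitimate here, since the system is linear in $(\varphi^\eps,\sigma^\eps,v^\eps)$ and each of these converges weakly in a space where the corresponding operator is bounded --- and then use the equations together with Lemma \ref{energy:velocity} and Proposition \ref{prop:v=k} to establish the energy identity a posteriori, from which the strong convergences follow exactly as in the paper. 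Your route is slightly more elementary and makes the role of linearity explicit; the paper's route is the one that survives when direct passage in the Euler--Lagrange structure is unavailable, and it is the mechanism that the abstract $\Gamma$-convergence-of-gradient-flows framework of Sandier--Serfaty is built on. The remaining ingredients (the $\liminf$ chain, the identification $v=-c_Wk$ on $\Gamma$, localization with test functions supported in $\Om^+\cup\Om^-$ for \eqref{part:Vbis}, and the recovery of strong convergence of $\sigma^\eps(t)$ and $v^\eps$ from convergence of the energies and of the dissipation) coincide with the paper's.
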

\bl
 \begin{proof}
The energy identity \eqref{energy} together with \eqref{gf:eps} imply
\begin{align*}
  2E^\eps(\varphi_0^\eps,\sigma_0^\eps)-2E^\eps(\varphi^\eps(t),\sigma^\eps(t))=&\int_0^t\left(\|\dot\varphi^\eps\|_{H^{-s}_n(\Om)}^2+\|\dot\sigma^\eps\|^2\right)ds\\
 \nonumber
 &+\int_0^t\left(\|-A^sv^\eps-A^s\sigma^\eps\|_{H^{-s}_n(\Om)}^2\right.\\
 \nonumber
 &\left.\qquad+\|-A^s \sigma^\eps+v^\eps-\varphi^\eps-\sigma^\eps\|^2\right)ds.
\end{align*}
Thus taking the liminf as $\eps\rightarrow0$ we infer
\begin{align}\label{inequalities}
 &\liminf_{\eps\rightarrow0}E^\eps(\varphi_0^\eps,\sigma_0^\eps)-E^\eps(\varphi^\eps(t),\sigma^\eps(t))\nonumber\\
 &\geq \frac{1}{2}\int_0^t\|2\dot\Gamma+\dot\sigma\|_{{H^{-s}_n(\Om)}}^2+\|-A^sv-A^s\sigma\|_{H^{-s}_n(\Om)}^2 ds\nonumber\\
  &\qquad+\frac{1}{2}\int_0^t\|\dot\sigma\|^2+\|-A^s \sigma+v-\varphi-\sigma\|^2 ds\nonumber\\
 &\geq \int_0^t-( 2\dot\Gamma +\dot\sigma,A^{s}(v+\sigma))_{{H^{-s}_n(\Om)}} +(\dot\sigma,-A^s \sigma+v-\varphi-\sigma) ds\nonumber\\
 &= \int_0^t-2\langle \dot\Gamma,v+\sigma\rangle+(\dot\sigma,-A^s\sigma-u-3\sigma) ds\nonumber\\
&= \int_0^t-2(\dot\Gamma,v+\sigma)_{L^2(\Gamma)}+(\dot\sigma,-A^s \sigma-u-3\sigma) ds\nonumber\\
 &= \int_0^t2c_W(\dot\Gamma,k)_{L^2(\Gamma)}-2( \dot\Gamma,\sigma)_{L^2(\Gamma)}+(\dot\sigma,-A^s \sigma-u-3\sigma) ds\nonumber\\
 &=E(\varphi_0,\sigma_0)-E(\varphi(t),\sigma(t)).
\end{align}
We have used \eqref{velocities:1} and the lower semicontinuity of the  norms in $L^2(0,T;H^{-s}_n(\Om))$ and $L^2(0,T;H)$  in the first inequality, the Cauchy-Schwartz inequality in the second one, the identity \eqref{v=mk} in the second equality and Lemma \ref{energy:velocity} in the last one.
On the other hand, by Proposition \ref{u:conv} we have
$$(\varphi^\eps(t),\sigma^\eps(t))\rightarrow(\varphi(t),\sigma(t))\;\;\;\;\;\text{ in  }L^1(\Om)\times L^1(\Om),$$
so that Theorem \ref{Gamma:conv} and the hypothesis that $(\varphi^\eps_0,\sigma^\eps_0)$ is a recovery sequence entails
$$\limsup_{\eps\rightarrow0}E(\varphi^\eps_0,\sigma^\eps_0)-E(\varphi^\eps(t),\sigma^\eps(t))\leq E(\varphi_0,\sigma_0)-E(\varphi(t),\sigma(t)).$$
Therefore all the inequalities in \eqref{inequalities} are equalities, and in particular we get that for a.e. $t\in[0,T]$,
\begin{subequations}
\begin{align}
& 2\dot\Gamma(t) +\dot\sigma(t)=-A^{s}( v(t)+ \sigma(t))\;\;\;\;\text{ a.e. on }\Omega,\\
&\dot\sigma(t)=-A^s \sigma(t)+v(t)-\varphi(t)-\sigma(t)\;\;\;\;\text{ a.e. on }\Omega.
\end{align}
 \end{subequations}
Combining these two equations we infer
\begin{align}\label{flowlim3}
 2\dot\Gamma(t)=-A^s v(t)+\varphi(t)+\sigma(t)-v(t)
\end{align}
for a.e. $t\in[0,T]$ and hence \eqref{f1} and \eqref{f2} are proved. \bl Using the fact that $\dot\Gamma(t)$ is supported on $\Gamma$, using test functions in $C^\infty_c(\Om^+\cup\Om^-)$ it is easily seen that 
\begin{align}\label{Delta:v}
 A^s v(t)=\varphi(t)+\sigma(t)-v(t)\;\;\;\;\text{ on }\Om^+\cup\Om^-,
\end{align}
for a.e. $t\in[0,T]$. 
%
%
This is  \eqref{part:Vbis}\bl, and  \eqref{part:v} follows by Proposition~\ref{prop:v=k}\bl.
\betti
Finally we have seen that, for all $t\in[0,T^*]$
$$E^\eps(\varphi^\eps(t),\sigma^\eps(t))\rightarrow E(\varphi(t),\sigma(t)).$$
Moreover, \eqref{apriori2} and \eqref{apriori4} imply that for all $t\in[0,T^*]$
$$\sigma^\eps(t)\rightarrow\sigma(t)\;\;\;\;\text{ strongly in }H,$$
while 
$$\varphi^\eps(t)\rightharpoonup\varphi(t)\;\;\;\;\text{ weakly in }H,$$
by \eqref{apriori1} and \eqref{apriori3}. Hence we deduce, thanks to the special form of the energies \eqref{E^eps} and \eqref{E^0},
\begin{align}
 &\int_\Om\eps^{-1}W(u^\eps(t))+\frac{\eps}{2}|\nabla u^\eps(t)|^2dx\rightarrow c_W\mathcal H^2(\Gamma(t)),\\
 &a_s(\sigma^\eps(t),\sigma^\eps(t))\rightarrow a_s(\sigma(t),\sigma(t)),
\end{align}
and then 
\begin{align}
 \sigma^\eps(t)\rightarrow\sigma(t)\;\;\;\;\text{ strongly in }H^s_n(\Om).
\end{align}
Moreover, we have also seen that 
$$\int_0^t\|-A^sv^\eps-A^s\sigma^\eps\|_{H^{-s}_n(\Om)}^2 ds\rightarrow\int_0^t\|-A^sv-A^s\sigma\|_{H^{-s}_n(\Om)}^2 ds,$$
so that 
\begin{align}
 -A^sv^\eps\rightarrow-A^sv\;\;\;\;\text{ strongly in }L^2(0,T^*;H^{-s}_n(\Om)),
\end{align}
which implies 
\begin{align}
 v^\eps\rightarrow v\;\;\;\;\text{ strongly in }L^2(0,T^*;H^{s}_n(\Om)).
\end{align}

\end{proof}
\bl

In the particular case $s=1$ or $s=2$ we can deduce then from \eqref{f1} a condition relating $v$ and $\dot\Gamma$ on $\Gamma$. This relation is stated in the following  result.\bl 
\begin{theorem}\label{main2}
 Assume hypotheses of Theorem \ref{main} with (HP1bis) and  $s=1$. Then the additional condition holds true
 $$\left[\frac{\partial v}{\partial n}\right](t)=-2\dot\Gamma(t)\;\;\;\;\mathcal H^2-\text{a.e. on }\Gamma,$$
 for a.e. $t\in[0,T^*]$.
\end{theorem}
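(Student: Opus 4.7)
The plan is to compare the distributional identity \eqref{f1} on all of $\Omega$ with the pointwise identity \eqref{part:Vbis} valid a.e. on $\Omega^+\cup\Omega^-$, and to isolate the singular contribution concentrated on $\Gamma(t)$. For $s=1$ we have $A^s=A=-\Delta$ (with homogeneous Neumann boundary conditions), so \eqref{f1} becomes the distributional identity
\begin{equation*}
 2\dot\Gamma(t) = \Delta v(t) + \varphi(t) + \sigma(t) - v(t) \quad \text{in } \Omega,
\end{equation*}
while \eqref{part:Vbis} reads $\Delta v(t) = v(t) - \varphi(t) - \sigma(t)$ pointwise a.e. on $\Omega^+\cup\Omega^-$. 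Since the left-hand side of the former is supported on $\Gamma(t)$, one expects this identity to be driven entirely by the singular part of $\Delta v(t)$ across $\Gamma(t)$.

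Under hypothesis (HP2), the interface $\Gamma(t)$ is of class $C^3$ and $v(t)\in H^1(\Omega)$ is continuous across it, while \eqref{part:Vbis} together with the regularity of $\varphi$ and $\sigma$ ensures that the restrictions $v^\pm(t):=v(t)|_{\Omega^\pm(t)}$ solve an elliptic equation with $L^2$ right-hand side on each side of $\Gamma(t)$, so the normal-derivative traces are well defined as elements of $H^{-1/2}(\Gamma(t))$. Next I would test the distributional identity against an arbitrary $\psi\in C^\infty_c(\Omega)$ and integrate by parts twice on each of $\Omega^+$ and $\Omega^-$, using continuity of $v(t)$ across $\Gamma(t)$ to kill the jumps involving $v(t)$ itself. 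This yields the distributional decomposition
\begin{equation*}
 \int_\Omega \Delta v(t)\,\psi\,dx = \int_{\Omega^+\cup\Omega^-}(\Delta v(t))\psi\,dx - \int_{\Gamma(t)} \left[\frac{\partial v(t)}{\partial n}\right]\psi\,d\mathcal H^2,
\end{equation*}
with $[\,\cdot\,]:=(\cdot)^+-(\cdot)^-$ referred to the unit normal $n$ pointing from $\Omega^-$ into $\Omega^+$. Substituting the pointwise identity from \eqref{part:Vbis} into the right-hand side, and using the distributional form of \eqref{f1} for the left-hand side, the volumetric contributions $\pm(\varphi+\sigma-v)$ cancel exactly, so that
\begin{equation*}
 \langle 2\dot\Gamma(t),\psi\rangle = -\int_{\Gamma(t)} \left[\frac{\partial v(t)}{\partial n}\right]\psi\,d\mathcal H^2.
\end{equation*}
Interpreting $\dot\Gamma(t)$ as the normal velocity times the surface measure $\mathcal H^2\llcorner\Gamma(t)$ and using the arbitrariness of $\psi$, the desired identity $[\partial v(t)/\partial n] = -2\dot\Gamma(t)$, holding $\mathcal H^2$-a.e. on $\Gamma(t)$ and for a.e. $t\in[0,T^*]$, will follow.

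The main obstacle is to rigorously split the distributional Laplacian $\Delta v(t)$ on $\Omega$ into its regular part (the pointwise Laplacian on $\Omega^+\cup\Omega^-$) plus a singular measure concentrated on $\Gamma(t)$, and to identify this singular measure with the jump of the normal derivative as a genuine element of $H^{-1/2}(\Gamma(t))$. The $C^3$ regularity of $\Gamma(t)$ from (HP2), together with the elliptic regularity of $v^\pm(t)$ coming from \eqref{part:Vbis}, is exactly what makes the two-sided trace formulas rigorous; once these are in place the conclusion is a direct distributional computation and a sign check, closely analogous in spirit to Lemma \ref{lemma:invDelta2}. The case $s=2$ (with $A^s=\Delta^2$) would proceed in the same way, performing four integrations by parts and using that \eqref{part:Vbis} then forces the jumps of $v$, $\partial_n v$, and $\Delta v$ across $\Gamma(t)$ to vanish, leaving only the jump of $\partial_n\Delta v$ to balance $2\dot\Gamma(t)$.
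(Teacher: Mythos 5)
Your argument is correct and rests on the same key idea as the paper's proof --- namely that, since $2\dot\Gamma(t)$ is supported on $\Gamma(t)$ while the bulk identity \eqref{part:Vbis} accounts for all of the regular part of $-Av+\varphi+\sigma-v$, the interface equation can only be balanced by the singular part of $\Delta v(t)$, which is the jump of the normal derivative --- but you execute it differently. The paper does not integrate by parts on $v$ directly: it introduces the auxiliary potential $w:=-v+A^{-1}\varphi+A^{-1}\sigma-A^{-1}v$, observes that \eqref{flowlim3} becomes $Aw=2\dot\Gamma$, so that $w\in V_\Gamma$ and the machinery of Section 2 (Lemma \ref{lemma:invDelta1} together with the definition \eqref{Delta:G}--\eqref{2.7} of $-\Delta_\Gamma$) immediately gives $\bigl[\frac{\partial w}{\partial n}\bigr]=2\dot\Gamma$, and then invokes Lemma \ref{lemma:invDelta2} to conclude that the three terms $A^{-1}\varphi$, $A^{-1}\sigma$, $A^{-1}v$ contribute no jump, so $\bigl[\frac{\partial w}{\partial n}\bigr]=-\bigl[\frac{\partial v}{\partial n}\bigr]$. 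Your direct route buys transparency (one sees exactly which volumetric terms cancel and why \eqref{part:Vbis} is the right companion identity) at the price of having to justify the two-sided normal traces of $v^\pm$ and the splitting of $\Delta v$ into regular plus surface parts; note that this justification is precisely the content the paper has already packaged into Lemma \ref{lemma:invDelta2} (a function whose distributional Laplacian lies in $L^2$ has no normal-derivative jump) and into the $V_\Gamma$/$T$/$T^*$ isomorphisms, so nothing is lost, and the elliptic regularity you invoke ($\Delta v\in L^2$ on each side of a $C^3$ interface, $v\in H^1(\Omega)$) does make the traces well defined in $H^{-1/2}(\Gamma)$. Two small points to tidy up: the sign in your decomposition of $\langle\Delta v,\psi\rangle$ depends on the orientation of $n$ and on whether $[\,\cdot\,]$ means $(\cdot)^+-(\cdot)^-$, so you should fix these consistently with the paper's convention in \eqref{Delta:G}--\eqref{2.7} before asserting the final sign; and the identity you obtain is a priori an equality in $H^{-1/2}(\Gamma(t))$, which upgrades to an $\mathcal H^2$-a.e.\ statement only because (HP2) makes $\dot\Gamma(t)$ a genuine $L^2(\Gamma(t))$ density. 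Your closing remark on $s=2$ matches the paper's treatment, which simply applies the same argument to $Aw$.
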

\begin{proof}
 Let us denote $w:=-v+A^{-1}\varphi+A^{-1}\sigma-A^{-1}v$. Equation \eqref{flowlim3} reads
 $Aw=2\dot\Gamma.$ This means that $w\in V_\Gamma$ and, using Lemma \ref{lemma:invDelta1}, that $-\Delta_\Gamma(w\llcorner_\Gamma)=2\dot\Gamma$, i.e.,
 $$\left[\frac{\partial w}{\partial n}\right](t)=2\dot\Gamma(t).$$
 But $\left[\frac{\partial w}{\partial n}\right]=-[\frac{\partial v}{\partial n}]+[\frac{\partial A^{-1}\varphi}{\partial n}]+[\frac{\partial A^{-1}\sigma}{\partial n}]-[\frac{\partial A^{-1}v}{\partial n}]=-[\frac{\partial v}{\partial n}]$ by Lemma \ref{lemma:invDelta2}, that is the thesis.
\end{proof}

\begin{theorem}
 Assume hypotheses of Theorem \ref{main} with (HP1) and  $s=2$. Then
 $$\left[\frac{\partial Av}{\partial n}\right](t)=-2\dot\Gamma(t)\;\;\;\;\mathcal H^2-\text{a.e. on }\Gamma,$$
 for a.e. $t\in[0,T^*]$.
 \end{theorem}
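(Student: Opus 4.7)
The plan is to mirror the strategy used for Theorem \ref{main2} ($s=1$), but with an extra power of $A^{-1}$ absorbed into the test function. The starting point is again equation \eqref{flowlim3}, which for $s=2$ reads
\begin{equation*}
 2\dot\Gamma(t)=-A^2 v(t)+\varphi(t)+\sigma(t)-v(t),
\end{equation*}
so applying $A^{-1}$ to both sides gives
\begin{equation*}
 A^{-1}(2\dot\Gamma(t))=-Av(t)+A^{-1}\varphi(t)+A^{-1}\sigma(t)-A^{-1}v(t).
\end{equation*}
Accordingly I would set
\begin{equation*}
 w(t):=-Av(t)+A^{-1}\varphi(t)+A^{-1}\sigma(t)-A^{-1}v(t),
\end{equation*}
so that by construction $Aw(t)=2\dot\Gamma(t)$.

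Since $\dot\Gamma(t)$ is a distribution supported on $\Gamma(t)$, this identity places $w(t)$ in the space $V_\Gamma$ introduced in Section \ref{setting}. Applying Lemma \ref{lemma:invDelta1} to $f=2\dot\Gamma(t)$ then yields $w(t)=T\circ(-\Delta_\Gamma)^{-1}(2\dot\Gamma(t))$, hence
\begin{equation*}
 \left[\frac{\partial w}{\partial n}\right](t)=-\Delta_\Gamma(w(t)\llcorner_\Gamma)=2\dot\Gamma(t)\qquad\text{in }H_n^{-1/2}(\Gamma).
\end{equation*}
The computation is then finished by expanding $[\partial w/\partial n]$ term by term:
\begin{equation*}
 \left[\frac{\partial w}{\partial n}\right]=-\left[\frac{\partial (Av)}{\partial n}\right]+\left[\frac{\partial (A^{-1}\varphi)}{\partial n}\right]+\left[\frac{\partial (A^{-1}\sigma)}{\partial n}\right]-\left[\frac{\partial (A^{-1}v)}{\partial n}\right],
\end{equation*}
and invoking Lemma \ref{lemma:invDelta2} to kill the last three jumps. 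To apply that lemma I need $\varphi(t), \sigma(t), v(t)\in H$, which is guaranteed by the a priori bounds \eqref{apriori1}, \eqref{apriori2}, and \eqref{apriori5} together with $s=2$. Combining yields $[\partial (Av)/\partial n](t)=-2\dot\Gamma(t)$, which is the claim.

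The main point requiring care — and which will be the only non-cosmetic obstacle — is verifying that the jump $[\partial (Av)/\partial n]$ is indeed well-defined as an element of $H_n^{-1/2}(\Gamma)$. This uses equation \eqref{part:Vbis}, which for $s=2$ reads $\Delta^2 v(t)=\varphi(t)+\sigma(t)-v(t)$ on $\Omega^+\cup\Omega^-$. Hence $Av=-\Delta v$ is an element of $H^2$ on each side of $\Gamma$ with right-hand side in $L^2$, so its normal derivative admits one-sided traces in $H^{-1/2}(\Gamma)$, and the jump is meaningful. The rest of the argument consists solely of the algebraic manipulation above plus the two lemmas already proved in Section \ref{setting}.
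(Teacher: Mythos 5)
Your proposal is correct and follows essentially the same route as the paper: the paper sets $w:=-v+A^{-2}\varphi+A^{-2}\sigma-A^{-2}v$ so that $A^2w=2\dot\Gamma$ and then applies the $s=1$ argument to $Aw$, which is precisely the object you call $w$. Your added remark on why $[\partial (Av)/\partial n]$ is well-defined via \eqref{part:Vbis} is a sensible (and welcome) extra check, but the core argument is the paper's own.
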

\begin{proof}
 Denoting again $w:=-v+A^{-2}\varphi+A^{-2}\sigma-A^{-2}v$, equation \eqref{flowlim3} reads $AAw=2\dot\Gamma.$ Thus, applying the same argument of Theorem \ref{main2} to $Aw$ we get the thesis. 
\end{proof}

\section*{Acknowledgements}

The financial support of the FP7-IDEAS-ERC-StG \#256872
(EntroPhase) is gratefully acknowledged by the authors. The present paper 
also benefits from the support of the GNAMPA (Gruppo Nazionale per l'Analisi Matematica, la Probabilit\`a 
e le loro Applicazioni) of INdAM (Istituto Nazionale di Alta Matematica) and the IMATI -- C.N.R. Pavia.

\end{document}